\numberwithin{equation}{section}
\numberwithin{figure}{section}
\theoremstyle{plain}
\newtheorem{thm}{\protect\theoremname}
  \theoremstyle{definition}
  \newtheorem{example}[thm]{\protect\examplename}
  \theoremstyle{definition}
  \newtheorem{defn}[thm]{\protect\definitionname}
  \theoremstyle{plain}
  \newtheorem{cor}[thm]{\protect\corollaryname}
  \theoremstyle{plain}
  \newtheorem{prop}[thm]{\protect\propositionname}
  \theoremstyle{remark}
  \newtheorem*{rem*}{\protect\remarkname}
  \theoremstyle{plain}
  \newtheorem{lem}[thm]{\protect\lemmaname}
  \theoremstyle{remark}
  \newtheorem{rem}[thm]{\protect\remarkname}
  \providecommand{\corollaryname}{Corollary}
  \providecommand{\definitionname}{Definition}
  \providecommand{\examplename}{Example}
  \providecommand{\lemmaname}{Lemma}
  \providecommand{\propositionname}{Proposition}
  \providecommand{\remarkname}{Remark}
\providecommand{\theoremname}{Theorem}
\begin{document}
\global\long\def\bbZ{\mathbb{Z}}

\global\long\def\RR{\mathbb{R}}

\global\long\def\bbN{\mathbb{N}}

\global\long\def\bbX{\mathbb{X}}

\global\long\def\MM{\mathbb{M}}

\global\long\def\cF{\mathcal{F}}

\global\long\def\cB{\mathcal{B}}

\global\long\def\fT{\mathfrak{\tau}}

\global\long\def\cCT{\mathcal{C}_{\fT-1}}

\global\long\def\cC{\mathcal{C}}

\global\long\def\RR{\mathbb{R}}

\global\long\def\BB{\mathcal{B}}

\global\long\def\P{\mathrm{P}}

\global\long\def\C#1{\mathcal{#1}}

\global\long\def\ZZ{\mathbb{Z}}

\global\long\def\RN#1{\left(T^{#1}\right)'}

\global\long\def\F#1{\mathcal{F}_{#1}}

\global\long\def\lloc{\ll^{loc}}

\global\long\def\PP{\mathbb{P}}

\global\long\def\ep{\underline{\epsilon}}

\global\long\def\p{\varphi}

\global\long\def\l{\lambda}

\global\long\def\NN{\mathbb{N}}

\global\long\def\M#1{\text{M}\left\{  #1\right\}  }

\global\long\def\P{{\rm P}}

\global\long\def\TT{\mathbb{T}}

\global\long\def\g{\mathfrak{g}}

\global\long\def\h{\mathfrak{h}}

\global\long\def\r{\varrho}

\global\long\def\qq{\mathtt{q}}

\global\long\def\rr{\mathtt{r}}

\title{Examples of type ${\rm III}_{1}$ inhomogenous Markov shifts supported
on topological Markov shifts}

\author{Zemer Kosloff}\thanks{The research of Z.K. was supported in part by the European Advanced
Grant StochExtHomog (ERC AdG 320977).}
\address{Mathematics Institute, University of Warwick, Coventry CV4 7AL, UK}
\email{z.kosloff@warwick.ac.uk}

\address{Email: z.kosloff@warwick.ac.uk}
\begin{abstract}
We construct inhomogenous Markov measures for which the shift is of
Kreiger type ${\rm III}_{1}$. These measures are fully supported
on a toplogical Markov shift space of the hyperbolic toral automorphism
$f(x,y)=\left(\{x+y\},x\right):\TT^{2}\to\TT^{2}$ and are used in
a subsequent paper \cite{Kos1} in the construction of $C^{1}$ Anosov
diffeomorphisms of $\mathbb{T}^{2}$ without an absolutely continuous
invariant measure. 
\end{abstract}

\maketitle

\section{Introduction}

Topological markov shifts (TMS) are an important model in ergodic
theory which plays a central role in smooth dynamics of $C^{1+\alpha}$
hyperbolic diffeomorphisms. Indeed, the construction of Markov partitions
of Adler and Weiss \cite{Adler Weiss} (toral automorphisms), Sinai
\cite{Si} (Anosov diffeomorphisms) and Bowen \cite{Bowen Rufus}
(Axion A diffeomorphisms) shows that given an hyperbolic diffeomorphism
there exists a TMS and a finite to one factor map from the TMS $\Sigma$
to the manifold $M$ which intertwines the actions of the shift $T$
and the diffeomorphism $f$. This factor map and the symbolic model
were used extensively in smooth dynamics, examples include showing
existence and uniqueness of the measure of maximal entropy and equilibrium
states, proving central limit theorems, Livsic' type theorems and
more. 

Another feature of the factor map $\pi:\Sigma\to M$ is that it is
finite to one except on the (volume) measure zero set which consists
of the union of iterations of $f$ on the (topological) boundary of
the atoms of the Markov partition, see example \ref{Example of a TMS}
below. This shows that a large class of measures on $M$ for which
$f$ is non singular (also known as quasi invariant), which includes
all the measures for which $f$ satisfies Poincare recurrence, can
be lifted back (since $\pi^{-1}$ is a function on the support of
the measure) to the TMS. This paper deals with constructions of inhomogenous
markov measures on TMS for which the shift is non singular, conservative
(recurrent) and ergodic yet there exists no absolutely continuous
shift invariant measure. In fact the system $\left(\Sigma,\BB_{\Sigma},\mu,T\right)$
is of Krieger type ${\rm III}_{1}$. These examples are Markovian
analogues of the Bernoulli shifts with non identical factor measures
as in \cite{Ham,Krengel,Kos}. 

The main motivation for us in this work is the paper \cite{Kos1}
where these measures are used in the contruction of a conservative,
ergodic $C^{1}$ Anosov diffeomorphism of $\mathbb{T}^{2}$ without
a Lebesgue absolutely continuous invariant measure. These diffeomorphisms
are obtained by smooth realization of $\left(\Sigma,\BB_{\Sigma},\mu,T\right)$. 

This paper is organised as follows. Section \ref{sec:Prelminary-definition-and}
contains the basic definitions from ergodic theory and the probability
theory of Markov chains which will be used in this paper. Section
\ref{sec:Type--Markov} starts with a discussion on the condition
of non singularity for the shift map with respect to a half stationary
inhomogenous Markov measure and a suffiecient condition for exactness
of the one sided shift. After this we present the construction of
the measures and prove that the shift is of type ${\rm III}_{1}$
with respect to these measures. We end the paper with an explanation
on how to construct such measures on a general TMS, some remarks about
these measures and an open question on the possible Krieger types
of Markov shifts.

\section{\label{sec:Prelminary-definition-and}Preliminary definitions }

\subsection{Short introduction to non singular ergodic theory}

For more details and explanations we refer the reader to \cite{Aar}. 

Let $\left(X,\BB,\mu\right)$ be a standard probability space. In
what follows equalities (and inclusions) of sets are modulo the measure
$\mu$ on the space. A measurable map $T:X\to X$ is \textit{non singular}
if $T_{*}\mu:=\mu\circ T^{-1}$ is equivalent to $\mu$ meaning that
they have the same collection of negligible sets. If $T$ is invertible
one has the Radon Nykodym derivatives 
\[
\left(T^{n}\right)'(x):=\frac{d\mu\circ T^{n}}{d\mu}(x):\ X\to\RR_{+},
\]
 A set $W\subset X$ is \textit{wandering} if $\left\{ T^{n}W\right\} _{n\in\ZZ}$
are pairwise disjoint and $T$ is \textit{conservative} if there exists
no wandering set of positive measure. By Hopf's criteria, an invertible
$T$ is conservative (w.r.t $\mu$) if and only if 
\[
\sum_{n=1}^{\infty}\left(T^{n}\right)'(x)=\infty\ \mu-a.e.
\]

A transformation $T$ is \textit{ergodic} if there are no non trivial
$T$ invariant sets. That is $T^{-1}A=A$ implies $A\in\{\emptyset,X\}$.
If $\left(X,\BB,\mu\right)$ is a non atomic measure space and $T$
is invertible and ergodic then $T$ is also conservative. The converse
implication is not true in general as there are conservative non-ergodic
transformations. Since proving ergodicity is usually a harder task
then proving conservativity we would like to concentrate on a class
of transformations, called $K$-automorphisms, for which conservativity
implies ergodicity.

A transformation is a $K$-\textit{\textcolor{black}{automorphism
}}if there exists a $\sigma$-algebra $\mathcal{F\subset B}$ such
that:
\begin{itemize}
\item $T^{-1}\cF\subset\BB$ meaning that $\cF$ is a factor of $\BB$. 
\item $\cap_{n=1}^{\infty}T^{-k}\cF=\left\{ \emptyset,X\right\} $ (exactness)
and $\vee_{n\in\ZZ}T^{n}\cF=\BB$ (exhaustiveness). 
\item $T'(x)$ is $\cF$ measurable. 
\end{itemize}
The first two properties are the standard definition of a $K$-automorphism
in the case of measure preserving automorphisms. The condition on
the measurability of the Radon Nykodym derivative comes to ensure
that $\left(X,\BB,\mu,T\right)$ is the \textbf{unique }natural extension
of the non invertible exact transformation $\left(X/\cF,\cF,\mu|_{\cF},T\right)$.
It was shown in \cite{Krengel,S-T} that a conservative and $K$ transformation
is necessarily ergodic.

The \textit{Krieger ratio set} $R(T)$: We say that $r\geq0$ is in
$R(T)$ if for every $A\in\BB$ of positive $\mu$ measure and for
every $\epsilon>0$ there exists an $n\in\ZZ$ such that 
\[
\mu\left(A\cap T^{-n}A\cap\left\{ x\in X:\left|\left(T^{n}\right)'(x)-r\right|<\epsilon\right\} \right)>0.
\]
The ratio set of an ergodic measure preserving transformation is a
closed multiplicative subgroup of $[0,\infty)$ and hence it is of
the form $\{0\},\{1\},\{0,1\},\{0\}\cap\left\{ \lambda^{n}:n\in\ZZ\right\} $
for $0<\lambda<1$ or $[0,\infty)$. Several ergodic theoretic properties
can be seen from the ratio set. One of them is that $0\in R(T)$ if
and only if there exists no $\sigma$-finite $T$-invariant $\mu$-
a.c.i.m. Another interesting relation is that $1\in R(T)$ if and
only if $T$ is conservative (Maharam's Theorem). If $R(T)=[0,\infty)$
we say that $T$ is of type ${\rm III}_{1}$.

\subsubsection{Topological Markov shifts}

A \textit{topological Markov shift} (TMS) on $S$ is the shift on
a shift invariant subset $\Sigma\subset S^{\mathbb{Z}}$ of the form 

\[
\Sigma_{A}:=\left\{ x\in S^{\ZZ}:A_{x_{i},x_{i+1}}=1\right\} ,
\]
where $A=\left\{ A_{s,t}\right\} _{s,t\in S}$ is a $\{0,1\}$ valued
matrix on $S$. A TMS is mixing if there exists $n\in\NN$ such that
$A_{s,t}^{n}>0$ for every $s,t\in S$. 

TMS appear in ergodic theory as a symbolic model for $C^{1+\alpha}$
Anosov and Axiom A diffeomorphisms via the construction Markov partitions
of the manifold $M$ \cite{Adler Weiss,Si,Bowen Rufus,Adler}. We
present here an example which motivates the choice of the TMS we will
be performing the construction on. 
\begin{example}
\label{Example of a TMS}Consider $f:\mathbb{T}^{2}\to\mathbb{T}^{2}$
the Toral automorphism defined by 
\[
f(x,y)=(\{x+y\},x)=\left(\begin{array}{cc}
1 & 1\\
1 & 0
\end{array}\right)\binom{x}{y}\ \mod1,
\]
where $\{t\}$ is the fractional part of $t$. Since $\left|\det\left(\begin{array}{cc}
1 & 1\\
1 & 0
\end{array}\right)\right|=1$, $f$ preserves the Lebesgue measure on $\mathbb{T}^{2}$. In \cite{Adler Weiss},
a method is given for the construction of Markov partitions for hyperbolic
toral automorphisms. For this example one has a a Markov partition
with three elements $\{R_{1},R_{2},R_{3}\}$, see figure \ref{fig:The-construction-of}
and \cite{Adler}.
\end{example}
\begin{figure}
\includegraphics[scale=0.5]{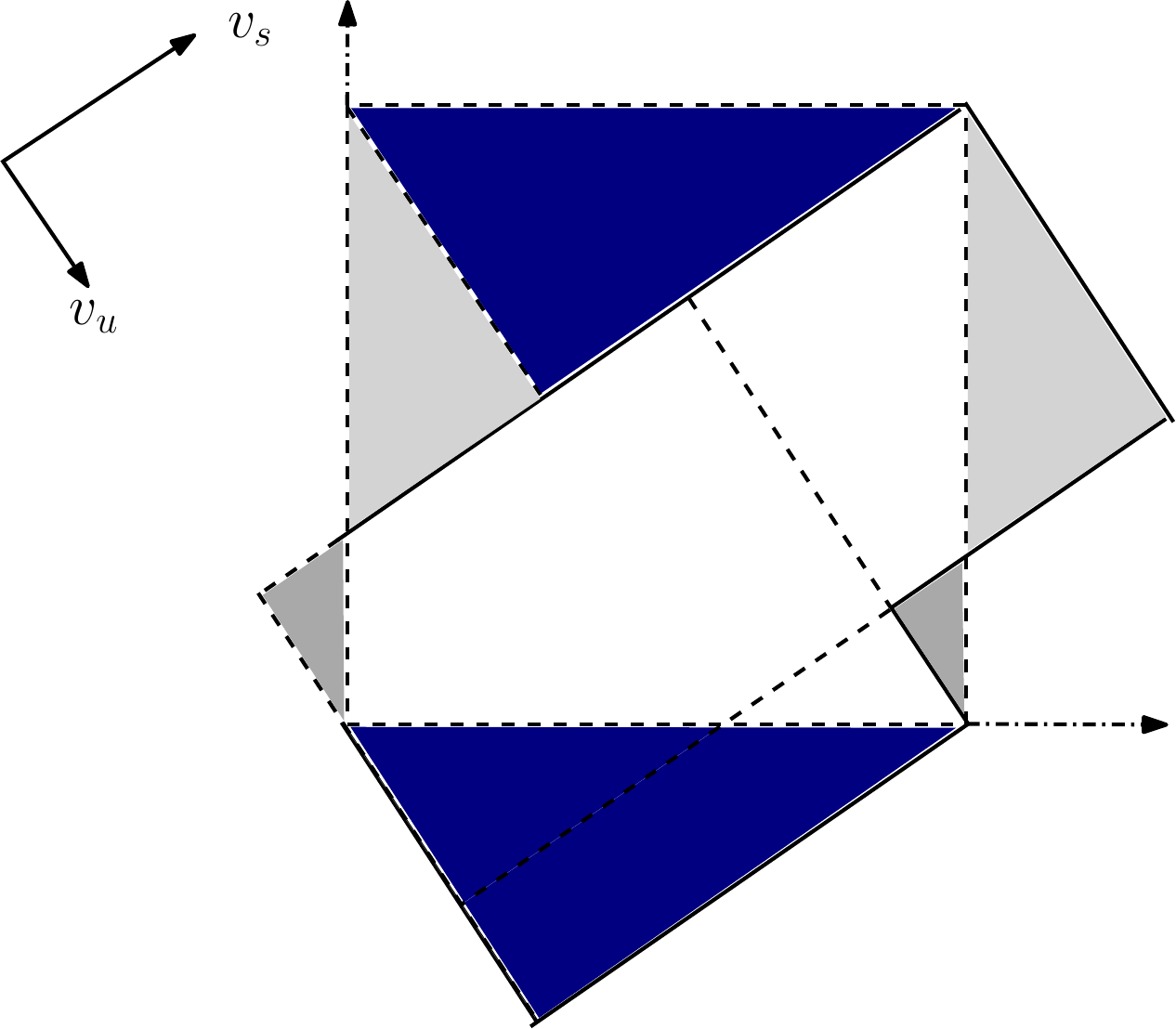}\qquad{}\includegraphics[scale=0.65]{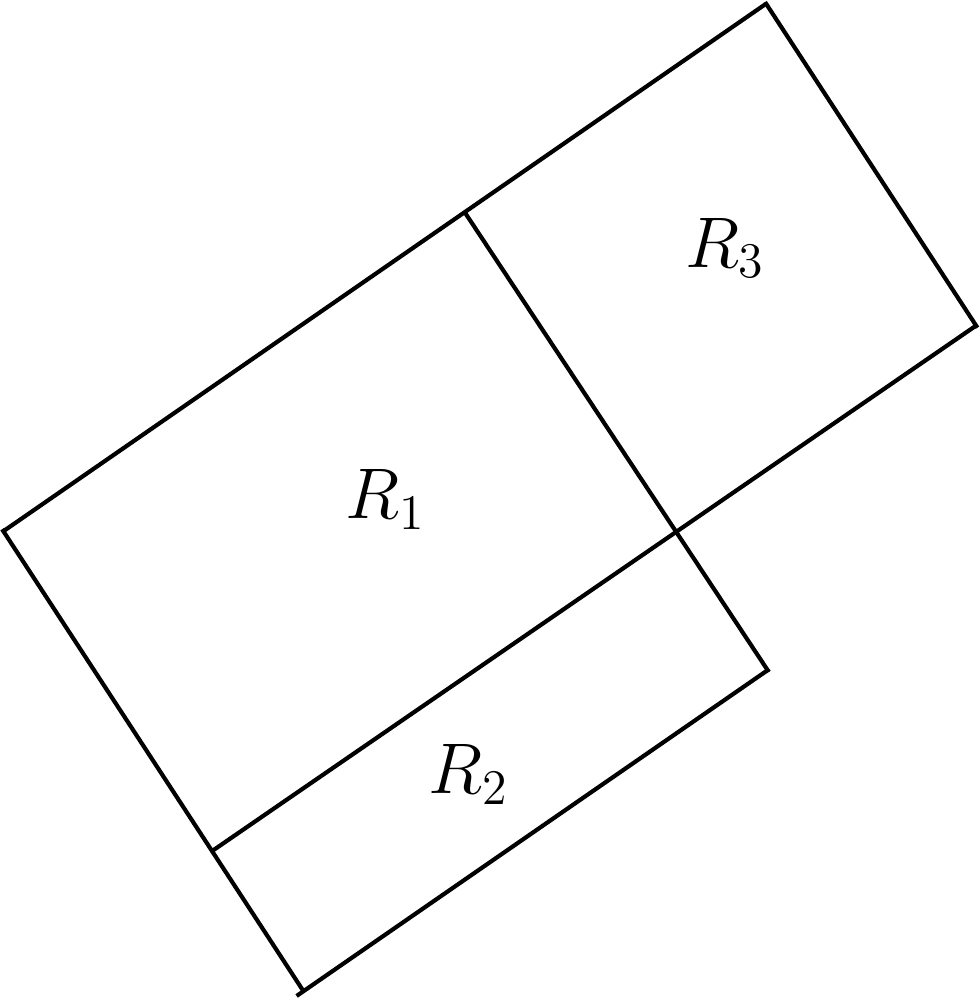}\caption{The construction of t\label{fig:The-construction-of}he Markov partition}
\end{figure}

The adjacency Matrix of the Markov partition is then defined by $A_{i,j}=1$
if and only if $R_{i}\cap f^{-1}\left(R_{j}\right)\neq\emptyset$.
In this example, 
\[
{\bf A}=\left(\begin{array}{ccc}
1 & 0 & 1\\
1 & 0 & 1\\
0 & 1 & 0
\end{array}\right).
\]
Let $\Phi:\Sigma_{A}\to\mathbb{T}^{2}$ be the map defined by 
\[
\Phi(x):=\bigcap_{n=-\infty}^{\infty}\overline{f^{-n}R_{x_{n}}}.
\]
Note that $\left\{ \cap_{n=-N}^{N}\overline{f^{-n}R_{x_{n}}}\right\} _{N=1}^{\infty}$
is a decreasing sequence of compact sets, hence by the Baire Category
Theorem, $\Phi(x)$ is well defined. The map $\Phi:\Sigma_{{\bf A}}\to\mathbb{T}^{2}$
is continuous, finite to one, and for every $x\in\Sigma_{{\bf A}}$,
\[
\Phi\circ T(x)=f\circ\Phi(x).
\]
 Thus $\Phi$ is a semi-conjugacy (topological factor map) between
$\left(\Sigma_{{\bf A}},T\right)$ to $\left(\mathbb{T}^{2},f\right)$.
In addition, for every $x\in\TT^{2}\backslash\cup_{n\in\ZZ}\cup_{i=1}^{3}f^{-n}\left(\partial R_{i}\right)$
there exists a unique $w\in\Sigma_{{\bf A}}$ so that $w=\Phi^{-1}(x).$
The Lebesgue measure $\lambda$ on $\mathbb{T}^{2}$ is $f$ invariant
and $\lambda\left(\cup_{n\in\ZZ}\cup_{i=1}^{3}f^{-n}\left(\partial R_{i}\right)\right)=0$.
Thus $\Phi$ defines an isomorphism between $\left(\mathbb{T}^{2},\lambda,f\right)$
and $\left(\Sigma_{{\bf A}},\mu_{\pi_{{\bf Q}},{\bf Q}},T\right)$
where $\mu_{\pi_{{\bf Q}},{\bf Q}}=\Phi_{*}\lambda$ is the stationary
Markov measure with 
\begin{equation}
P_{j}\equiv\mathbf{Q}:=\left(\begin{array}{ccc}
\frac{\varphi}{1+\varphi} & 0 & \frac{1}{1+\varphi}\\
\frac{\varphi}{1+\varphi} & 0 & \frac{1}{1+\varphi}\\
0 & 1 & 0
\end{array}\right)\label{eq: Lebesgue Transition function}
\end{equation}
and 
\begin{equation}
\pi_{j}=\mathbf{{\bf \pi_{Q}}}:=\left(\begin{array}{c}
1/\sqrt{5}\\
1/\varphi\sqrt{5}\\
1/\varphi\sqrt{5}
\end{array}\right)=\left(\begin{array}{c}
\lambda\left(R_{1}\right)\\
\lambda\left(R_{2}\right)\\
\lambda\left(R_{3}\right)
\end{array}\right).\label{eq:Lebesgue stationary dist.}
\end{equation}

\vspace{-3.5mm}

\subsection{\label{sub:Markov-Chains}Markov Chains}

\subsubsection{Basics of Stationary (homogenous) Chains}

Let $S$ be a finite set which we regard as the state space of the
chain, $\pi=\left\{ \pi(s)\right\} _{s\in S}$ a probability vector
on $S$ and ${\rm P}=\left(P_{s,t}\right)_{s,t\in S}$ a stochastic
matrix. The vector $\pi$ and $\P$ define a Markov chain $\left\{ X_{n}\right\} $
on $S$ by 
\begin{eqnarray*}
\PP_{\pi}\left(X_{0}=t\right) & := & \pi(t)\\
\PP\left(X_{n}=s\left|X_{1},..,X_{n-1}\right.\right) & := & P_{X_{n-1},s}.
\end{eqnarray*}
${\rm P}$ is \textit{irreducible} if for every $s,t\in S$, there
exists $n\in\NN$ such that 
\[
P_{s,t}^{n}=\PP\left(X_{n}=t\left|X_{0}=s\right.\right)>0,
\]
and ${\rm P}$ is \textit{aperiodic} if for every $s\in S$ 
\[
{\rm gcd}\left\{ n:\ P_{s,s}^{n}>0\right\} =1.
\]
Given an irreducible and aperiodic ${\rm P}$, there exists a unique
stationary ($\pi_{\mathrm{P}}\mathrm{P}=\pi_{\mathrm{P}}$) probability
vector $\pi_{{\rm P}}$. In addition for every $s,t\in S$, 
\[
P_{s,t}^{n}\xrightarrow[n\to\infty]{}\pi_{P}(t).
\]
Since $S$ is a finite state space, it follows that for any initial
distribution $\pi$ on $S$, 
\[
\PP_{\pi}\left(X_{n}=t\right)=\sum_{s\in S}\pi(s)P_{s,t}^{n}\xrightarrow[n\to\infty]{}\pi_{_{P}}(t).
\]
An important fact which will be used in the sequel is that the stationary
distribution is continuous with respect to the stochastic matrix.
That is if $\left\{ {\rm P_{n}}\right\} _{n=1}^{\infty}$ is a sequence
of irreducible and aperiodic stochastic matrices such that 
\[
\|{\rm P}_{n}-{\rm P\|_{\infty}:=}\max_{s,t\in S}\left|\left(\P_{n}\right)_{s,t}-\P_{s,t}\right|\xrightarrow[n\to\infty]{}0
\]
and ${\rm P}$ is irreducible and aperiodic then 
\[
\left\Vert \pi_{_{{\rm P}_{n}}}-\pi_{_{{\rm P}}}\right\Vert _{\infty}\to0.
\]

\subsubsection{Non singular Markov shifts: }

Let $S$ be a finite set. An inhomogeneous Markov Chain is a stochastic
process $\left\{ X_{n}\right\} _{n\in\ZZ}$ such that for each times
$t_{1},..,t_{l}\in\ZZ$, and $s_{1},\ldots,s_{l}\in S$,
\[
\mathbb{P}\left(X_{t_{1}}=s_{1},X_{t_{2}}=s_{2},\dots,X_{t_{l}}=s_{l}\right)=\mathbb{P}\left(X_{t_{1}}=s_{1}\right)\prod_{k=1}^{l-1}\mathbb{P}\left(\left.X_{t_{k+1}}=s_{k+1}\right|X_{t_{k}}=s_{k}\right).
\]
Note that unlike in the classical setting of Markov Chains $\mathbb{P}\left(\left.X_{t_{k+1}}=s_{k+1}\right|X_{t_{k}}=s_{k}\right)$
can depend on $t_{k}$. 

The ergodic theoretical formulation is as follows. Let $\left\{ P_{n}\right\} _{n=-\infty}^{\infty}\subset M_{S\times S}$
be a sequence of stochastic matrices on $S.$ In addition let $\left\{ \pi_{n}\right\} _{n=-\infty}^{\infty}$
be a sequence of probability distributions on $S$ so that for every
$s\in S$ and $n\in\ZZ$,
\begin{equation}
\sum_{t\in S}\pi_{n-1}(t)\cdot P_{n}\left(t,s\right)=\pi_{n}(s).\label{eq: Kolmogorov's consistency criteria}
\end{equation}
Then one can define a measure on the collection of cylinder sets,
\[
[b]_{k}^{l}:=\left\{ x\in S^{\ZZ}:\ x_{j}=b_{j}\ \forall j\in[k,l]\cap\ZZ\right\} 
\]
by 
\[
\mu\left(\left[b\right]_{k}^{l}\right):=\pi_{k}\left(b_{k}\right)\prod_{j=k}^{l-1}P_{j}\left(b_{j},b_{j+1}\right).
\]
Since the equation (\ref{eq: Kolmogorov's consistency criteria})
is satisfied, $\mu$ satisfies the consistency condition. Therefore
by Kolmogorov's extension theorem $\mu$ defines a measure on $S^{\ZZ}$.
In this case we say that $\mu$ is the Markov measure generated by
$\left\{ \pi_{n},P_{n}\right\} _{n\in\ZZ}$ and denote $\mu=\text{M}\left\{ \pi_{n},P_{n}:\ n\in\ZZ\right\} $.
By $\text{M}\left\{ \pi,P\right\} $ we mean the measure generated
by $P_{n}\equiv P$ and $\pi_{n}\equiv\pi$. We say that $\mu$ is
non singular for the shift $T$ on $S^{\ZZ}$ if $T_{*}\mu\sim\mu$.
See subsection \ref{sub:Non-singular-Markov shifts} for an extension
of Kakutani's Theorem for product measures to a class of inhomogenous
Markov measures which is used in giving a crtieria for non singularity
of the shift in our examples.

\section{\label{sec:Type--Markov}Type ${\rm {\rm III}_{1}}$ Markov shifts
supported on topological Markov shifts}

This section is organized as follows. First we give a condition for
non singularity of the Markov measure which is an application of ideas
of Cabanos, Liptzer and Shiryaev \cite{Shi,Le Page Mandrelkar } and
prove a necessary condition for exactness of a one sided shift. Then
we construct the aforementioned examples and prove that they are type
${\rm III}_{1}$ measures for the shift.

\subsubsection{\label{sub:Non-singular-Markov shifts} Non Singularity criteria
for Markov shifts}

In order to check if a measure is shift non singular we apply the
following reasoning of \cite{Shi}, see also \cite{Le Page Mandrelkar }. 
\begin{defn}
Given a filtration $\left\{ \F n\right\} $, we say that $\nu\ll^{loc}\mu$
($\nu$ is locally absolutely continuous with respect to $\mu$) if
for every $n\in\mathbb{N}$ 
\[
\nu_{n}\ll\mu_{n}
\]
 where 
\[
\nu_{n}=\nu|_{\F n}.
\]
 
\end{defn}
Suppose that $\nu\lloc\mu$ w.r.t $\left\{ \F n\right\} $, set 
\[
z_{n}:=\frac{d\nu_{n}}{d\mu_{n}},
\]
and 
\[
\alpha_{n}(x):=z_{n}(x)\cdot z_{n-1}^{\oplus}(x),
\]
where $z_{n-1}^{\oplus}=\frac{1}{z_{n-1}}\cdot{\bf 1}_{\left[z_{n-1}\neq0\right]}$.
The question is when $\nu\lloc\mu$ implies $\nu\ll\mu$. 
\begin{thm}
\label{thm: Shiryaev}\cite[Thm. 4, p. 528]{Shi}. If $\nu\lloc\mu$
then $\nu\ll\mu$ if and only if 
\[
\sum_{k=1}^{\infty}\left[1-E_{\mu}\left(\left.\sqrt{\alpha_{n}}\right|\F{n-1}\right)\right]<\infty\quad\nu\ a.s.
\]
If $\nu\ll\mu$ then 
\[
\frac{d\nu}{d\mu}=\lim_{n\to\infty}z_{n}.
\]

\end{thm}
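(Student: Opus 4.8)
The plan is to read the statement as the discrete Kabanov--Liptser--Shiryaev dichotomy for the Hellinger process and to prove it through a single auxiliary martingale built under $\nu$. First I would record the structural facts. Since $\nu\lloc\mu$, for $A\in\F{n-1}\subset\F n$ one has $\int_A z_n\,d\mu=\nu(A)=\int_A z_{n-1}\,d\mu$, so $\{z_n\}$ is a nonnegative $\mu$-martingale and $z_n\to z_\infty$ $\mu$-a.s. by the martingale convergence theorem. The Lebesgue decomposition of $\nu$ relative to $\mu$ on $\F\infty=\vee_n\F n$ can be read off this martingale: $d\nu=z_\infty\,d\mu+d\nu_s$ with $\nu_s$ carried by $\{\lim_n z_n=\infty\}$, so that $\nu\ll\mu$ holds precisely when $z_n$ has a finite limit $\nu$-a.s. (equivalently, when $\{z_n\}$ is $\mu$-uniformly integrable). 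Because $\nu_n\ll\mu_n$ we also have $z_n>0$ $\nu$-a.s. for every $n$; and since $E_\mu(z_n\mid\F{n-1})=z_{n-1}$ forces $z_n=0$ $\mu$-a.s. on $\{z_{n-1}=0\}$, the factorization $z_n=z_{n-1}\alpha_n$ holds $\mu$-a.s.

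Next I would analyze the conditional Hellinger factor $\gamma_n:=E_\mu(\sqrt{\alpha_n}\mid\F{n-1})$. Conditional Jensen gives $\gamma_n\le\sqrt{E_\mu(\alpha_n\mid\F{n-1})}=1$ on $\{z_{n-1}\neq0\}$, while $\gamma_n>0$ $\nu$-a.s. The key point is a symmetry: applying the Bayes formula $z_{n-1}\,E_\nu(f\mid\F{n-1})=E_\mu(fz_n\mid\F{n-1})$ with $f=\sqrt{z_{n-1}/z_n}$, together with $\sqrt{z_n}=\sqrt{z_{n-1}}\sqrt{\alpha_n}$, yields $E_\nu\big(\sqrt{z_{n-1}/z_n}\,\big|\,\F{n-1}\big)=\gamma_n$ $\nu$-a.s. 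I would then introduce the process $N_n:=\sqrt{z_0/z_n}\big/\prod_{k=1}^n\gamma_k$ (with $N_0=1$), which is well defined and positive $\nu$-a.s. Writing $N_n=N_{n-1}\cdot\gamma_n^{-1}\sqrt{z_{n-1}/z_n}$ and using the previous identity shows $E_\nu(N_n\mid\F{n-1})=N_{n-1}$, so $N_n$ is a nonnegative $\nu$-martingale and hence converges $\nu$-a.s. to a finite limit $N_\infty$.

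The conclusion then follows from the elementary product dichotomy: since $0<\gamma_k\le1$, the partial products $\prod_{k\le n}\gamma_k$ decrease to a limit $\Gamma_\infty$, and $\Gamma_\infty>0$ iff $\sum_k(1-\gamma_k)<\infty$. On the event $\{\sum_k(1-\gamma_k)<\infty\}$ we have $\Gamma_\infty>0$, and from $\sqrt{z_0/z_n}=N_n\prod_{k\le n}\gamma_k\to N_\infty\Gamma_\infty<\infty$ together with $0<z_0<\infty$ $\nu$-a.s. we deduce that $z_n$ has a finite limit $\nu$-a.s. there; on the complementary event $\Gamma_\infty=0$ forces $\sqrt{z_0/z_n}\to0$, i.e. $z_n\to\infty$ $\nu$-a.s. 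Comparing with the decomposition criterion of the first paragraph gives exactly $\nu\ll\mu\iff\sum_k(1-\gamma_k)<\infty$ $\nu$-a.s. Finally, once $\nu\ll\mu$, uniform integrability of $\{z_n\}$ upgrades $z_n\to z_\infty$ to $L^1(\mu)$ convergence and identifies $\lim_n z_n=z_\infty=d\nu/d\mu$.

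The main obstacle I expect is the bookkeeping around the sets where the densities vanish under $\mu$: the martingale manipulations and the Bayes formula are valid only $\nu$-a.s. on $\{z_{n-1}>0\}$, so I must check at each step that the exceptional $\mu$-null (yet possibly $\nu$-charged) behaviour is absorbed by the convention $z_{n-1}^{\oplus}=z_{n-1}^{-1}\mathbf 1_{[z_{n-1}\neq0]}$ and that $z_n=z_{n-1}\alpha_n$ genuinely holds $\mu$-a.s. A second delicate point is the precise martingale characterization of the Lebesgue decomposition --- that $\nu_s$ lives exactly on $\{z_n\to\infty\}$ --- since this is the bridge between the analytic sum condition and the measure-theoretic conclusion; I would either cite it or derive it by applying the supermartingale convergence theorem to $1/z_n$ under $\nu$.
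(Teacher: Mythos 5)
The paper itself does not prove this statement\textemdash it is quoted verbatim from Shiryaev \cite[Thm. 4, p. 528]{Shi}\textemdash so your attempt can only be measured against the standard Kabanov--Liptser--Shiryaev argument, whose main objects you do reconstruct correctly: the factors $\gamma_n=E_\mu(\sqrt{\alpha_n}\mid\F{n-1})$, the Bayes-formula identity $E_\nu(\alpha_n^{-1/2}\mid\F{n-1})=\gamma_n$, the $\nu$-martingale $N_n$, and the Lebesgue-decomposition criterion ($\nu\ll\mu$ iff $z_n$ has a finite limit $\nu$-a.s.). Writing $\Gamma_n=\prod_{k\le n}\gamma_k$, your necessity direction is sound: on $\{\Gamma_\infty=0\}$ one has $\sqrt{z_0/z_n}=N_n\Gamma_n\to N_\infty\cdot0=0$, so $z_n\to\infty$ there, which is incompatible with $\nu\ll\mu$ since $\mu(z_n\to\infty)=0$; the final identification $d\nu/d\mu=\lim_n z_n$ is also fine. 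But the sufficiency direction has a genuine gap: from $\sqrt{z_0/z_n}=N_n\Gamma_n\to N_\infty\Gamma_\infty<\infty$ and $\Gamma_\infty>0$ you conclude that ``$z_n$ has a finite limit.'' That inference is invalid: the dangerous case is $N_\infty=0$, in which case $\sqrt{z_0/z_n}\to0$, i.e. $z_n\to\infty$. The martingale convergence theorem gives only $E_\nu N_\infty\le1$, and nonnegative martingales routinely converge to $0$ on sets of positive or even full measure (your own $z_n$ under $\mu$ does exactly this whenever $\nu\perp\mu$). Ruling out $\nu\left(N_\infty=0,\ \Gamma_\infty>0\right)>0$ is precisely the analytic core of the theorem, and nothing in your argument addresses it.

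The standard repair needs second moments under $\nu$, which your Bayes-formula computation almost hands you. Set $\beta_n:=\alpha_n^{-1/2}$, so that $\prod_{k\le n}\beta_k=\sqrt{z_0/z_n}$ $\nu$-a.s. and $E_\nu(\beta_n\mid\F{n-1})=\gamma_n$. The same Bayes formula gives $E_\nu(\beta_n^2\mid\F{n-1})=E_\nu(\alpha_n^{-1}\mid\F{n-1})=\mu(\alpha_n>0\mid\F{n-1})\le1$, whence $E_\nu\bigl((\beta_n-\gamma_n)^2\mid\F{n-1}\bigr)\le1-\gamma_n^2\le2(1-\gamma_n)$ and $E_\nu\bigl((1-\beta_n)^2\mid\F{n-1}\bigr)\le2(1-\gamma_n)$. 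Hence on $\{\sum_k(1-\gamma_k)<\infty\}$ the $\nu$-martingale $M_n=\sum_{k\le n}(\gamma_k-\beta_k)$ has finite predictable bracket and therefore converges $\nu$-a.s. there (a.s. convergence of square-integrable martingales on $\{\langle M\rangle_\infty<\infty\}$), so the signed series $\sum_k(1-\beta_k)$ converges; and conditional Borel--Cantelli applied to the nonnegative summands $(1-\beta_k)^2$ gives $\sum_k(1-\beta_k)^2<\infty$ there as well. Together these force $\sum_k\log\beta_k$ to converge, i.e. $\prod_k\beta_k$ has a strictly positive finite limit, i.e. $z_n$ converges to a finite limit $\nu$-a.s. on $\{\Gamma_\infty>0\}$\textemdash which is exactly the statement your proof assumes. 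Without this step (or an equivalent stopping-time/uniform-integrability argument on the sets $\{\Gamma_n\ge\delta\}$), the ``if'' half of the theorem is not proved.
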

Given a a markovian measure $\mu=\M{\pi_{n},P_{n}:\ n\in\ZZ}$ on
$S^{\ZZ}$, we want to know when $\mu\sim\mu\circ T$. The natural
filtration on the product space is the sequence of algebras $\C F_{n}:=\sigma\left\{ [b]_{-n}^{n};\ b\in S^{\ZZ\cap[-n,n]}\right\} $. 

The measures which we will construct are fully supported on a TMS
$\Sigma_{{\bf A}}$ meanning that for every $n\in\NN$, 
\[
\text{supp}P_{n}:=\left\{ (s,t)\in S\times S:\ P_{n}(s,t)>0\right\} =\text{supp}{\bf A}.
\]
This implies that $\mu\circ T\lloc\mu$. In addition the measure $\M{P_{n},\pi_{n}}$
will be half stationary in the sense that for every $j\leq0$, $P_{j}:=\mathbf{Q}$
where ${\bf Q}$ is the matrix from Example \ref{Example of a TMS}.
This condition implies that for every $j\leq0$, $\pi_{j}=\pi_{{\bf _{Q}}}$
and 
\[
\alpha_{n}(x)=\frac{P_{n-1}\left(x_{n-1},x_{n}\right)}{P_{n}\left(x_{n-1},x_{n}\right)},\ \forall n>0.
\]
By Theorem \ref{thm: Shiryaev}, in this setting, $\mu-$non singular
if and only if 
\[
\sum_{n=-\infty}^{\infty}\left[1-E_{\nu}\left(\left.\sqrt{\alpha_{n}}\right|\F{n-1}\right)(x)\right]=\sum_{n=0}^{\infty}\left[1-\sum_{s\in S}\sqrt{P_{n-1}\left(x_{n-1},s\right)P_{n}\left(x_{n-1},s\right)}\right]<\infty
\]
for $\mu\circ T\ a.e.\ x.$ The following corollary concludes our
discussion. 
\begin{cor}
\label{Non singularity for Markov shifts}Let $\nu=\M{\pi_{n},P_{n}:\ n\in\ZZ}$,
where $\left\{ P_{n}\right\} $ are fully supported on a TMS $\Sigma_{A}$
and there exists an aperiodic and irreducible $P\in M_{\mathcal{S\times S}}$
such that for all $n\leq0$, $P_{n}\equiv P$

\begin{itemize} 

\item   $\nu\circ T\sim\nu$ if and only if 

\begin{equation}
\sum_{n=0}^{\infty}\sum_{s\in S}\left(\sqrt{P_{n}\left(x_{n},s\right)}-\sqrt{P_{n-1}\left(x_{n},s\right)}\right)^{2}<\infty,\ \nu\circ T\ a.s.\ x.\label{eq: Non singularity condition.}
\end{equation}
 \item   If $\nu\circ T\sim\nu$ then for all $n\in\NN$, 
\[
\RN{{\bf n}}(x)=\prod_{k=0}^{\infty}\frac{P_{k-{\bf n}}\left(x_{k},x_{k+1}\right)}{P_{k}\left(x_{k},x_{k+1}\right)}.
\]
\end{itemize}  
\end{cor}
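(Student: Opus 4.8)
The plan is to read both assertions as applications of Theorem \ref{thm: Shiryaev} with base measure $\nu$ and the shifted measures $\nu\circ T^{n}$, exploiting the two structural facts already in place: full support on $\Sigma_{A}$ gives local absolute continuity $\nu\circ T^{n}\lloc\nu$ with respect to $\left\{ \F n\right\} $, and half-stationarity ($P_{j}\equiv P$, $\pi_{j}\equiv\pi_{P}$ for $j\le0$) collapses the cylinder ratios. The preceding discussion has already reduced shift non-singularity to the finiteness, $\nu\circ T$-a.e., of the Hellinger-type series $\sum_{n\ge0}\left[1-\sum_{s}\sqrt{P_{n-1}(x_{n-1},s)P_{n}(x_{n-1},s)}\right]$, so for the first bullet it remains only to rewrite this series and reconcile indices, while the second bullet is a direct cylinder computation together with the limit clause $\frac{d\nu}{d\mu}=\lim z_{n}$ of Theorem \ref{thm: Shiryaev}.

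For the first bullet the algebraic core is the elementary identity, valid for any two probability vectors $p,q$ on the finite set $S$, namely $\sum_{s}(\sqrt{p_{s}}-\sqrt{q_{s}})^{2}=2-2\sum_{s}\sqrt{p_{s}q_{s}}$, equivalently $1-\sum_{s}\sqrt{p_{s}q_{s}}=\frac{1}{2}\sum_{s}(\sqrt{p_{s}}-\sqrt{q_{s}})^{2}$. Applying it with $p=P_{n-1}(x_{n-1},\cdot)$ and $q=P_{n}(x_{n-1},\cdot)$ converts the Shiryaev series into $\frac{1}{2}\sum_{n\ge0}\sum_{s}(\sqrt{P_{n-1}(x_{n-1},s)}-\sqrt{P_{n}(x_{n-1},s)})^{2}$. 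To reach the displayed form (\ref{eq: Non singularity condition}), whose summand is evaluated at $x_{n}$ rather than $x_{n-1}$, I would use the shift relation: writing $h_{n}(x)=\sum_{s}(\sqrt{P_{n-1}(x_{n-1},s)}-\sqrt{P_{n}(x_{n-1},s)})^{2}$ one has $h_{n}(Tx)=\sum_{s}(\sqrt{P_{n-1}(x_{n},s)}-\sqrt{P_{n}(x_{n},s)})^{2}$ because $(Tx)_{n-1}=x_{n}$, so the two series are carried into one another by $T$ and the corresponding almost-everywhere statements match under the convention fixing $\nu\circ T$. Finally, since the summand is symmetric under interchanging $P_{n-1}$ and $P_{n}$, the same condition is simultaneously the criterion for $\nu\circ T\ll\nu$ and for $\nu\ll\nu\circ T$, which yields the equivalence $\nu\circ T\sim\nu$.

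For the Radon--Nikodym derivative I would compute the finite-stage densities $z_{N}^{(n)}:=d(\nu\circ T^{n})|_{\F N}/d\nu|_{\F N}$ directly on the cylinders $[x]_{-N}^{N}$. Writing out both cylinder probabilities from the definition of $\M{\pi_{k},P_{k}}$ and using that the shift carries $\nu$ to $\M{\pi_{k-n},P_{k-n}}$ gives $z_{N}^{(n)}(x)=\frac{\pi_{-N-n}(x_{-N})}{\pi_{-N}(x_{-N})}\prod_{j=-N}^{N-1}\frac{P_{j-n}(x_{j},x_{j+1})}{P_{j}(x_{j},x_{j+1})}$. Half-stationarity makes every factor with $j\le0$ equal to $1$ (there $P_{j}=P_{j-n}=P$) and the prefactor equal to $1$ (there $\pi_{-N-n}=\pi_{-N}=\pi_{P}$), leaving $\prod_{j=0}^{N-1}\frac{P_{j-n}(x_{j},x_{j+1})}{P_{j}(x_{j},x_{j+1})}$. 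Because $\nu\circ T\sim\nu$ is assumed, $\nu\circ T^{n}\sim\nu$ for every $n$, so in particular $\nu\circ T^{n}\ll\nu$ and the limit clause of Theorem \ref{thm: Shiryaev} identifies $\lim_{N\to\infty}z_{N}^{(n)}$ with $d(\nu\circ T^{n})/d\nu=\left(T^{n}\right)'$, giving the stated product.

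The analytic substance — local absolute continuity, the martingale and Hellinger reduction, and convergence of the densities — is already provided by Theorem \ref{thm: Shiryaev} and the preceding reduction, so I expect the only genuinely delicate point to be the index and measure bookkeeping rather than any estimate: landing the summand on the correct coordinate $x_{n}$, tracking the $\nu$ versus $\nu\circ T$ almost-everywhere qualifier through the shift, and fixing the orientation of $\nu\circ T^{n}$ so that the half-stationary cancellation produces $P_{j-n}$ in the numerator. This shift bookkeeping is where I would take the most care.
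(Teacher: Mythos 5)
Your proposal is correct and follows essentially the same route as the paper: the corollary is presented there as the conclusion of the preceding discussion, which applies Theorem \ref{thm: Shiryaev} to the cylinder filtration, uses full support for local absolute continuity and half-stationarity to collapse $\alpha_{n}$ to a ratio of transition probabilities, and implicitly invokes the identity $1-\sum_{s}\sqrt{p_{s}q_{s}}=\frac{1}{2}\sum_{s}\left(\sqrt{p_{s}}-\sqrt{q_{s}}\right)^{2}$ together with the limit clause $\frac{d\nu}{d\mu}=\lim_{n\to\infty}z_{n}$ for the product formula. If anything, you are more explicit than the paper on the two points it glosses over: the symmetry of the Hellinger summand needed to upgrade one-sided absolute continuity to equivalence, and the index and almost-everywhere bookkeeping under the shift.
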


\subsubsection*{A condition for exactness of the one sided shift}

Let $S$ be a countable set and $\left\{ \left(\pi_{n},P_{n}\right)\right\} _{n=1}^{\infty}\subset\mathcal{P}(S)\times\mathcal{M}_{S\times S}$.
Denote the one sided shift on $S^{\NN}$ by $\sigma$and by $\cF$
the Borel $\sigma-$algebra of $S^{\NN}$. The following is a sufficient
condition for exactness (trivial tail $\sigma$-field) of the one
sided shift which is well known in the theory of non homogenous Markov
chains.\textcolor{red}{{} }\textcolor{black}{We include a simple ergodic
theoretic proof for the sake of completeness. }
\selectlanguage{american}%
\begin{prop}
\label{prop:Exactness of Markov shifts}Let $S$ be a countable set
and $\mu$ be a Markovian measure on $S^{\mathbb{N}}$ which is defined
by $\left\{ \pi^{(k)},P^{(k)}:\ k\in\NN\cup\{0\}\right\} $. If there
exists $C>0$ and $N_{0}\in\mathbb{N}$ so that for every $s,t\in S$,
and $k\in\mathbb{N}$,
\begin{equation}
\left(P_{k}P_{k+1}\cdots P_{k+N_{0}-1}\right)_{s,t}\geq C\label{eq:NS Aperidoicity}
\end{equation}
 then the one sided shift $\left(S^{\mathbb{N}\cup\{0\}},\mathcal{F},\mu,\sigma\right)$
is exact. \end{prop}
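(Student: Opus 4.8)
The plan is to show that the tail $\sigma$-field
\[
\mathcal{T}:=\bigcap_{n\geq0}\sigma^{-n}\mathcal{F}=\bigcap_{n\geq0}\sigma\left(x_n,x_{n+1},\dots\right)
\]
is $\mu$-trivial, which is precisely the assertion that $\sigma$ is exact. So I fix $A\in\mathcal{T}$ and aim to prove $\mu(A)\in\{0,1\}$. First note that the minorization (\ref{eq:NS Aperidoicity}) forces $S$ to be finite: each row of the stochastic matrix $P_kP_{k+1}\cdots P_{k+N_0-1}$ sums to $1$, so $C\cdot|S|\leq1$; write $d:=|S|$.

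The first step is to collapse $\mathbf{1}_A$ to a function of a single coordinate. Since $A\in\sigma(x_n,x_{n+1},\dots)$ for every $n$, the Markov property (the past and present $\sigma(x_0,\dots,x_n)$ and the future $\sigma(x_n,x_{n+1},\dots)$ are conditionally independent given $x_n$) gives
\[
E_{\mu}\!\left(\mathbf{1}_A\mid x_0,\dots,x_n\right)=E_{\mu}\!\left(\mathbf{1}_A\mid x_n\right)=:g_n(x_n),\qquad g_n(s):=\mu\!\left(A\mid x_n=s\right).
\]
Using in addition that $A\in\sigma(x_{n+N_0},x_{n+N_0+1},\dots)$ and conditioning on $x_{n+N_0}$, the Markov property produces the $N_0$-step forward recursion
\[
g_n(s)=\sum_{t\in S}M^{(n)}_{s,t}\,g_{n+N_0}(t),\qquad M^{(n)}_{s,t}:=\left(P_nP_{n+1}\cdots P_{n+N_0-1}\right)_{s,t},
\]
where $M^{(n)}_{s,t}\geq C$ for all $s,t$ by (\ref{eq:NS Aperidoicity}) with $k=n$.

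The heart of the argument is a Dobrushin-type contraction of the oscillation $\mathrm{osc}(g):=\max_{s}g(s)-\min_{s}g(s)$. For any $s,s'$ the minorization gives $\sum_t\min\!\left(M^{(n)}_{s,t},M^{(n)}_{s',t}\right)\geq Cd$, hence $\tfrac12\sum_t\left|M^{(n)}_{s,t}-M^{(n)}_{s',t}\right|\leq1-Cd$. Since $g_n(s)-g_n(s')=\sum_t\left(M^{(n)}_{s,t}-M^{(n)}_{s',t}\right)g_{n+N_0}(t)$ and the coefficients sum to zero, subtracting the midpoint of $g_{n+N_0}$ yields
\[
\mathrm{osc}(g_n)\leq(1-Cd)\,\mathrm{osc}(g_{n+N_0}).
\]
Iterating this and using $0\leq g_m\leq1$ (so $\mathrm{osc}(g_m)\leq1$) gives $\mathrm{osc}(g_n)\leq(1-Cd)^m\to0$, so each $g_n$ is a constant $c_n$; feeding this back into the recursion gives $c_n=c_{n+N_0}$. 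Finally I close with L\'evy's upward theorem: as $\mathcal{F}_n:=\sigma(x_0,\dots,x_n)\uparrow\mathcal{F}$ and $\mathbf{1}_A$ is $\mathcal{F}$-measurable, $g_n(x_n)=E_{\mu}\!\left(\mathbf{1}_A\mid\mathcal{F}_n\right)\to\mathbf{1}_A$ $\mu$-a.s.; but $g_n(x_n)=c_n$ is non-random, so the limit $\lim_n c_n$ exists and $\mathbf{1}_A=\lim_n c_n\in\{0,1\}$ $\mu$-a.s., i.e.\ $\mu(A)\in\{0,1\}$.

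I expect the main obstacle to be the careful justification of the two conditioning identities in the first step, namely that tail-measurability of $A$ together with the inhomogeneous Markov property collapses the conditional expectation to a function of $x_n$ and then yields the clean $N_0$-step recursion; once these are in place, the oscillation contraction and the martingale limit are routine.
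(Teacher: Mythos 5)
Your proof is correct, but it follows a genuinely different route from the paper's. The paper never introduces conditional probabilities of the tail event given the present state: it proves, directly from the cylinder formula for $\mu$, the correlation inequality $\mu\left(D\cap\sigma^{-(n+N_{0})}B\right)\geq C\,\mu(D)\,\mu\circ\sigma^{-(n+N_{0})}(B)$ for every $n$-cylinder $D$ and every measurable $B$; for a tail set $B$, written as $B=\sigma^{-(n+N_{0})}B_{n+N_{0}}$, this becomes $\mu(D\cap B)\geq C\mu(D)\mu(B)$, and the martingale convergence theorem applied to $\mu\left(B\mid\alpha_{n}\right)$ then forces $1_{B}\geq C\mu(B)$ a.e., hence $\mu(B)\in\{0,1\}$. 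You instead run the classical Doeblin--Dobrushin contraction: collapse $E_{\mu}\left(1_{A}\mid x_{0},\dots,x_{n}\right)$ to $g_{n}(x_{n})$ via the Markov property, derive the $N_{0}$-step recursion, contract the oscillation by the factor $1-C|S|$, conclude that each $g_{n}$ is constant, and finish with L\'evy's upward theorem. Both arguments are sound and both end with martingale convergence; yours is more quantitative (it yields a geometric mixing rate and the exact constancy of $\mu\left(A\mid x_{n}=s\right)$ in $s$), while the paper's needs only one inequality on cylinders and no conditioning machinery, so it sidesteps the two points you rightly flag as needing care: the extension of the Markov property from cylinder events to tail events (a monotone class argument), and the fact that $\mu\left(A\mid x_{n}=s\right)$ is defined only at states of positive probability --- a harmless issue here, since your own hypothesis gives $\mathbb{P}\left(x_{n}=t\right)\geq C$ for every $t$ once $n\geq N_{0}$, which is all the L\'evy step requires. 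Your preliminary observation that the minorization forces $|S|\leq1/C$, hence $S$ finite, is also correct and is exactly what makes the oscillation bound nontrivial.
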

\selectlanguage{english}%
\begin{rem*}
In the setting of Markov maps, exactness was proved under various
distortion properties {[}see \cite{Aar,Thaler}{]}. Their conditions
guarantees the existence of an absolutely continuous $\sigma$-finite
invariant measure. \end{rem*}
\selectlanguage{american}%
\begin{proof}
The measure $\mu\circ T^{-n}$ is the Markov measure generated by
$Q_{k}:=P_{k+n}$ and $\tilde{\pi}_{k}:=\pi_{k+n}$. Let $\alpha_{n}$
be the collection of $n$ cylinders of the form $[d]_{1}^{n}$ and
$\alpha^{*}=\cup_{n}\alpha_{n}$.

For every $D=\left[a\right]_{0}^{n}\in\alpha_{n}$ and $B=[b]_{0}^{n(B)}\in\alpha^{*}$,
\begin{eqnarray*}
\mu\left(D\cap T^{-\left(n+N_{0}\right)}B\right) & = & \mu(D)\left(P_{n}P_{n+1}\cdots P_{n+N_{0}-1}\right)_{a_{n},b_{0}}\prod_{j=0}^{n(B)-1}P_{N_{0}+n+j}\left(b_{j},b_{j+1}\right)\\
 & \geq & C\mu(D)\pi_{n+N_{0}}\left(b_{0}\right)\prod_{j=0}^{n(B)-1}P_{N_{0}+n+j}\left(b_{j},b_{j+1}\right)=C\cdot\mu(D)\mu\circ T^{-\left(n+N_{0}\right)}(B).
\end{eqnarray*}
Consequently for all $B\in\cF$ and $D\in\alpha_{n}$, 
\[
\mu\left(D\cap T^{-\left(n+N_{0}\right)}B\right)\geq C\cdot\mu(D)\mu\circ T^{-\left(n+N_{0}\right)}(B).
\]
Let $B\in\cap_{n=1}^{\infty}\sigma^{-n}\cF$ and $D\in\alpha_{n}$.
Writing $B_{n+N_{0}}\in\cF$ for a set such that $B=T^{-n-N_{0}}B_{n}$,
\begin{eqnarray*}
\mu\left(D\cap B\right) & = & \mu\left(D\cap T^{-\left(n+N_{0}\right)}B_{n+N_{0}}\right)\\
 & \geq & C\cdot\mu(D)\mu\circ T^{-\left(n+N_{0}\right)}\left(B_{n+N_{0}}\right)\\
 & = & C\mu(D)\mu(B)
\end{eqnarray*}
and thus for every $n\in\mathbb{N}$, $\mu\left(B\left|\alpha_{n}\right.\right)\geq C\mu(B).$
Since $\alpha_{n}\uparrow\alpha^{*}$ and $\alpha^{*}$ generates
$\cF$, 
\[
\mu\left(B\left|\alpha_{n}\right.\right)(x)\xrightarrow[n\to\infty]{}1_{B}(x)\ \mu-a.s.
\]
by the Martingale convergence theorem. It follows that if $\mu(B)>0$
then 
\[
1_{B}(x)\geq C\mu(B)>0\ \mu-a.s.
\]
This shows that for every $B\in\cap_{n=1}^{\infty}\sigma^{-n}\cF$,
$\mu(B)\in\{0,1\}$ (the shift is exact).
\end{proof}
\selectlanguage{english}%

\subsection{\label{sub:Type III_1 Markov shifts}Type ${\rm III_{1}}$ Markov
Shifts}

We will construct a Markov measure supported on $\Sigma_{{\bf A}}$
from Example \ref{Example of a TMS}. In the end of this subsection
we will explain what needs to be altered in the case of a general
mixing TMS and conclude with some open questions. 

In this subsection, let $\Omega:=\Sigma_{{\bf A}}$, $\mathcal{B}:=\mathcal{B}_{\Sigma_{{\bf A}}}$
and $T$ is the two sided shift on $\Omega$. For two integers $k<l$,
write $\mathcal{F}\left(k,l\right)$ for the algebra of sets generated
by cylinders of the form $[b]_{k}^{l},\ b\in\{1,2,3\}^{l-k}$.

\subsubsection{Idea of the construction of the type ${\rm III}$ Markov measure. }

The construction uses the ideas in \cite{Kos1}. For every $j\leq0$
\[
P_{j}\equiv\mathbf{Q}\ {\rm and}\ \pi_{j}\equiv\pi_{\mathbf{_{Q}}},
\]
where $\mathbf{Q}$ and $\pi_{\mathbf{Q}}$ are as in (\ref{eq: Lebesgue Transition function})
and (\ref{eq:Lebesgue stationary dist.}) respectively. On the positive
axis one defines on larger and larger chunks the stochastic matrices
which depend on a distortion parameter $\lambda_{k}\geq1$ where $1$
means no distortion. Now a cylinder set $[b]_{-n}^{n}$ fixes the
values of the first $n$ terms in the product form of the Radon Nykodym
derivatives. As a natural first step in proving that a given number
is in the ratio set is to check the condition for cylinder sets, we
would like to be able to correct the values of the Radon Nykodym derivatives
which were fixed by the cylinder set. This corresponds to a lattice
condition on $\lambda_{k}$ which is less straightforward then the
one in \cite{Kos}. The measure of the set $[b]_{-n}^{n}\cap T^{-N}[b]_{-n}^{n}\cap\left\{ \left(T^{N}\right)'\approx a\right\} $
could be very small which forces us to look for many approximately
independent such events so that their union covers at least a fixed
proportion of $[b]_{-n}^{n}$. In the product measure case we used
independence of the coordinates. For a Markov measure the coordinates
are not independent and this leads us to a condition which comes from
the convergence to the stationary distribution and the mixing property
for stationary chains. 

More specifically the construction goes as follows. We define inductively
$5$ sequences $\left\{ \lambda_{j}\right\} $, $\left\{ m_{j}\right\} $,
$\left\{ n_{j}\right\} $, $\left\{ N_{j}\right\} $ and $\left\{ M_{j}\right\} $
where 
\begin{eqnarray*}
M_{0} & = & 1\\
N_{j} & := & N_{j-1}+n_{j}\\
M_{j} & := & N_{j}+m_{j}.
\end{eqnarray*}
 This defines a partition of $\mathbb{N}$ into segments $\left\{ \left[M_{j-1},N_{j}\right),\ \left[N_{j},M_{j}\right)\right\} _{j=1}^{\infty}$.
The sequence $\left\{ P_{n}\right\} $ equals $\mathbf{Q}$ on the
$\left[N_{j},M_{j}\right)$ segments while on the $\left[M_{j-1},N_{j}\right)$
segments we have $P_{n}\equiv\mathbf{Q}_{\lambda_{j}}$, the $\lambda_{j}$
perturbed stochastic matrix. The $\mathbf{Q}$ segments facilitate
the form of some of the Radon Nykodym derivatives while the perturbed
segments come to ensure that $\mu\perp\M{\pi_{{\bf Q}},{\bf Q}}$
and that the ratio set condition is satisfied for cylinder sets.

Notation: By $x=a\pm b$ we mean $a-b\leq x\leq a+b$.

\subsubsection{The construction.\label{sub:The-construction.}}

Choice of the base of induction: Let $M_{0}=1,\ \lambda_{1}>1$, $n_{1}=2$,
$N_{1}=3$ and 
\[
\mathbf{Q}_{\mathbf{1}}:=\left(\begin{array}{ccc}
\frac{\lambda_{1}\varphi}{1+\lambda_{1}\varphi} & 0 & \frac{1}{1+\lambda_{1}\varphi}\\
\frac{\varphi}{1+\varphi} & 0 & \frac{1}{1+\varphi}\\
0 & 1 & 0
\end{array}\right)
\]
be the $\lambda_{1}$ perturbed matrix. Set $P_{1}=P_{2}=\mathbf{Q}_{\mathbf{1}}$
and $\pi_{0}=\pi_{\mathbf{Q}}$. The measures $\pi_{1},\pi_{2}$ are
then defined by equation (\ref{eq: Kolmogorov's consistency criteria}).
Let $m_{1}=3$ and thus $M_{1}=6.$ Set $P_{j}=\mathbf{Q}$ for $j\in\left[N_{1},M_{1}\right)=\left[3,6\right)$
and $\pi_{3},\pi_{4},\pi_{5}$ be defined by equation (\ref{eq: Kolmogorov's consistency criteria}). 

Assume that $\left\{ \lambda_{j},m_{j},n_{j},N_{j},M_{j}\right\} _{j=1}^{l-1}$
have been chosen. 

\textbf{Choice of $\lambda_{l}$:} Notice that the function 
\[
f(x):=x\frac{1+\varphi}{1+\varphi x}.
\]
 is monotone increasing and continuous in the segment $[1,\infty)$.
Therefore we can choose $\lambda_{n}$\textgreater{}1 which satisfies
the following three conditions:
\begin{enumerate}
\item Finite approximation of the Radon-Nykodym derivatives condition: 
\begin{equation}
\left(\lambda_{l}\right)^{2m_{l-1}}<e^{\frac{1}{2^{l}}}.\label{eq:condition on Lambda}
\end{equation}
This condition ensures an approximation of the derivatives by a finite
product. 
\item Lattice condition: 
\begin{equation}
\lambda_{l-1}\cdot\frac{1+\varphi}{1+\varphi\lambda_{l-1}}\in\left(\lambda_{l}\cdot\frac{1+\varphi}{1+\varphi\lambda_{l}}\right)^{\NN},\label{eq: Lattice condition on lambda}
\end{equation}
where $a^{\NN}:=\left\{ a^{n}:n\in\NN\right\} $.
\item Let 
\[
\mathbf{Q_{l}}:=\left(\begin{array}{ccc}
\frac{\varphi\lambda_{l}}{1+\varphi\lambda_{l}} & 0 & \frac{1}{1+\varphi\lambda_{l}}\\
\frac{\varphi}{1+\varphi} & 0 & \frac{1}{1+\varphi}\\
0 & 1 & 0
\end{array}\right).
\]
and $\pi_{\mathbf{Q_{l}}}$ it's unique stationary probability. Notice
that when $\lambda_{l}$ is close to $1$, then $\mathbf{Q_{l}}$
is close to $\mathbf{Q}$ in the $L_{\infty}$ sense. Therefore by
continuity of the stationary distribution we can demand that 
\begin{equation}
\|\pi_{\mathbf{Q}}-\pi_{\mathbf{Q_{l}}}\|_{\infty}<\frac{1}{2^{l}}.\label{eq: lambda is small so stationary dist. are close}
\end{equation}

\end{enumerate}
\textbf{Choice of $n_{l}$:} It follows from the Lattice condition,
equation (\ref{eq: Lattice condition on lambda}), that for each $k\leq l-1$,
\[
\left(\lambda_{k}\cdot\frac{1+\varphi}{1+\varphi\lambda_{k}}\right)\in\left(\lambda_{l}\cdot\frac{1+\varphi}{1+\varphi\lambda_{l}}\right)^{\NN}.
\]
 Choose $n_{l}$ large enough so that for every $k\leq l-1$ (notice
that the demand on $k=1$ is enough) there exists $\NN\ni p=p(k,l)\leq\frac{n_{l}}{20}$
so that 
\begin{equation}
\left(\lambda_{l}\cdot\frac{1+\varphi}{1+\varphi\lambda_{l}}\right)^{p}=\left(\lambda_{k}\cdot\frac{1+\varphi}{1+\varphi\lambda_{k}}\right).\label{eq: n_t is very large w.r.t lattic condition}
\end{equation}

Till now we have defined $\left\{ P_{j},\pi_{j}\right\} _{j=-\infty}^{M_{l-1}}$.
By the mean ergodic theorem for Markov chains \cite[Th. 4.16]{Levin Peres Wilmer Mixing Times}
and (\ref{eq: lambda is small so stationary dist. are close}), one
can demand by enlarging $n_{l}$ if necessary that in addition 
\begin{equation}
\nu_{\pi_{M_{l-1}},\mathbf{Q_{l}}}\left(x:\ \frac{1}{n_{l}}\sum_{j=1}^{n_{l}}{\bf 1}_{\left[x_{j}=1\right]}\in\left(\frac{1}{\sqrt{5}}-\frac{1}{2^{l}},\frac{1}{\sqrt{5}}+\frac{1}{2^{l}}\right)\right)>1-\frac{1}{l},\label{Ergodic theorem cond 1}
\end{equation}
and 
\begin{equation}
\nu_{\pi_{M_{l-1}},\mathbf{Q_{l}}}\left(x:\ \frac{1}{n_{l}}\sum_{j=1}^{n_{l}}{\bf 1}_{\left[x_{j}=2,x_{j+1}=3\right]}>\frac{1}{15}\right)>1-\frac{1}{l},\label{eq:Ergodic theorem cond 2}
\end{equation}
where $\nu$ is the Markov measure on $\{1,2,3\}^{\NN}$ defined by
and ${\rm \mathbf{Q_{l}}}$ and $\pi_{M_{t-1}}$. The numbers inside
the set were chosen since 
\[
\pi_{\mathbf{Q_{l}}}(1)\in\left(\frac{1}{\sqrt{5}}-\frac{1}{2^{l}},\frac{1}{\sqrt{5}}+\frac{1}{2^{l}}\right),
\]
and similarly for $l$ large enough 
\begin{eqnarray*}
\int{\bf 1}_{\left[x_{0}=2,x_{1}=3\right]}(x)d\nu_{\pi_{\mathbf{Q_{l}}},\mathbf{Q_{l}}} & = & \pi_{\mathbf{Q_{l}}}(2)\left(\mathbf{Q_{l}}\right)_{2,3}=\left(\frac{1}{\varphi\sqrt{5}}\pm\frac{1}{2^{l}}\right)\frac{1}{\varphi+1}>\frac{1}{15}.
\end{eqnarray*}
\textbf{Choice of $N_{l}$:} Let $N_{l}:=M_{l-1}+n_{l}$. Now set
for all $j\in\left[M_{l-1},N_{l}\right)$, 
\[
P_{j}=\mathbf{Q{}_{l}}
\]
 and $\left\{ \pi_{j}\right\} _{j=M_{l-1}+1}^{N_{l}}$ be defined
by equation (\ref{eq: Kolmogorov's consistency criteria}). \\
\textbf{Choice of $m_{l}$:} Let $k_{l}$ be the $\left(1\pm\left(\frac{1}{3}\right)^{3N_{l}}\right)$
mixing time of $\mathbf{Q}$. That is for every ${\bf n}>k_{l}$ ,
$A\in\F{}\left(0,l\right)$, $B\in\mathcal{F}\left(l+{\bf n},\infty\right)$
and initial distribution $\tilde{\pi}$, 
\begin{equation}
\nu_{\tilde{\pi},\mathbf{Q}}\left(A\cap B\right)=\left(1\pm3^{-3N_{l}}\right)\nu_{\tilde{\pi},\mathbf{Q}}\left(A\right)\nu_{\pi_{\mathbf{Q}},\mathbf{Q}}\left(T^{{\bf n}+l}B\right).\label{eq: Mixing time}
\end{equation}
Demand in addition that $k_{l}>N_{l}$ and 
\begin{equation}
\left\Vert \pi_{N_{l}}\mathbf{Q}^{k_{l}}-\pi_{\mathbf{Q}}\right\Vert _{\infty}<3^{-3N_{l}}.\label{eq: almost stationary dist. for Q}
\end{equation}
 To explain the last condition notice that equation (\ref{eq: Kolmogorov's consistency criteria})
together with the fact that $P_{j}$ is constant on blocks means that
\[
\pi_{N_{l}}\mathbf{Q}^{m}=\pi_{N_{l}+m}.
\]
Let $m_{l}$ be large enough so that 
\begin{equation}
\left(1-9^{-3N_{l}}\right)^{m_{l}/4k_{l}}\leq\frac{1}{l},\label{m_l large so that the sum of bad events is small}
\end{equation}
and 
\begin{equation}
\left(m_{l}-N_{l}\right)\lambda_{1}^{-2N_{l}}\geq1.\label{m_l is large enough for shift conservative}
\end{equation}

To summarize the construction. We have defined inductively sequences
$\left\{ n_{l}\right\} ,\ \left\{ N_{l}\right\} ,\ \left\{ m_{l}\right\} ,\ \left\{ M_{l}\right\} $
of integers which satisfy 
\[
M_{l}<N_{l+1}=M_{l}+n_{l}<M_{l+1}=N_{l+1}+m_{l+1}.
\]
In addition we have defined a monotone decreasing sequence $\left\{ \lambda_{l}\right\} $
which decreases to $1$ and using that sequence we defined new stochastic
matrices $\left\{ \mathbf{Q_{l}}\right\} $. Now we set 
\[
P_{j}:=\begin{cases}
\mathbf{Q}, & j\leq0\\
\mathbf{Q_{l}}, & M_{l-1}\leq j<N_{l}\\
\mathbf{Q}, & N_{l}\leq j<M_{l}
\end{cases},
\]
and $\pi_{j}=\pi_{\P}$ for $j\leq0$. The rest of the $\pi_{j}$'s
are defined by the consistency condition, equation (\ref{eq: Kolmogorov's consistency criteria}).
Finally let $\mu$ be the Markovian measure on $\left\{ 1,2,3\right\} ^{\mathbb{Z}}$
defined by $\left\{ \pi_{j},P_{j}\right\} _{j=-\infty}^{\infty}$. 

Notice that for all $j\in\NN$,  ${\rm supp}P_{j}\equiv{\rm supp}A={\rm supp}\mathbf{Q}$.

\subsubsection{Statement of the Theorem and the proof of non singularity and conservativity}
\begin{thm}
The shift $\left(\left\{ 1,2,3\right\} ^{\mathbb{Z}},\mu,T\right)$
is non singular,conservative, ergodic and of type ${\rm III}_{1}$. \end{thm}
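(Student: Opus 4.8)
The plan is to verify the four assertions in the order non-singularity, the $K$-property, conservativity (whence ergodicity), and finally $R(T)=[0,\infty)$. Non-singularity I would read off Corollary \ref{Non singularity for Markov shifts}: since $P_n=\mathbf{Q}$ off the blocks $[M_{l-1},N_l)$, each summand in (\ref{eq: Non singularity condition.}) vanishes unless $n$ is one of the two endpoints of such a block, where it is $O((\lambda_l-1)^2)$; the smallness built into (\ref{eq:condition on Lambda}) makes $\sum_l(\lambda_l-1)^2<\infty$, so the series converges for every $x$ and $\mu\circ T\sim\mu$. To obtain ergodicity economically I would then check that $T$ is a $K$-automorphism with respect to $\cF=\sigma(x_k:\ k\ge0)$: one has $T^{-1}\cF=\sigma(x_k:\ k\ge1)\subset\cF$ and $\vee_nT^n\cF=\cB$, the product formula of Corollary \ref{Non singularity for Markov shifts} shows that $(T^{1})'$ depends only on $(x_0,x_1,\dots)$ and is thus $\cF$-measurable, and exactness $\cap_kT^{-k}\cF=\{\emptyset,\Omega\}$ is nothing but exactness of the one-sided shift. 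The latter follows from Proposition \ref{prop:Exactness of Markov shifts}, whose hypothesis (\ref{eq:NS Aperidoicity}) holds with $N_0=3$ because every $P_n\in\{\mathbf{Q}\}\cup\{\mathbf{Q}_l\}$ is supported on $\mathbf{A}$, $\mathbf{A}^3>0$ entrywise, and the entries of the $P_n$ are uniformly bounded below.

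For conservativity I would verify Hopf's criterion $\sum_{N\ge1}(T^{N})'(x)=\infty$. In the product $(T^{N})'(x)=\prod_{k\ge0}P_{k-N}(x_k,x_{k+1})/P_k(x_k,x_{k+1})$ a factor differs from $1$ only at the sparse edges of the perturbation blocks, and since $\log\lambda_j\to0$ summably (again (\ref{eq:condition on Lambda})) the infinite product converges with a lower bound governed only by the perturbations met up to index $\approx N_l$. The point is then that for each $l$ there is a family of at least $m_l-N_l$ shifts $N$ lying in the long $\mathbf{Q}$-segment $[N_l,M_l)$ for which $(T^{N})'(x)\ge\lambda_1^{-2N_l}$ for a.e.\ $x$; summing over this family and using (\ref{m_l is large enough for shift conservative}) contributes at least $1$ at each scale $l$, so the Hopf sum diverges. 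Conservativity together with the $K$-property yields ergodicity by \cite{Krengel,S-T}.

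It remains to prove $R(T)=[0,\infty)$. As $T$ is conservative, ergodic and non-singular, $R(T)\setminus\{0\}$ is a closed subgroup of $(\RR_{>0},\times)$, and among the admissible forms recalled in Section \ref{sec:Prelminary-definition-and} the only one containing a sequence of elements $\neq1$ converging to $1$ is $[0,\infty)$; hence it suffices to exhibit $\rho_l\in R(T)$ with $\rho_l\downarrow1$ and $\rho_l\neq1$. I would take $\rho_l=f(\lambda_l)=\lambda_l\frac{1+\varphi}{1+\varphi\lambda_l}$, the ratio of the $(1,1)$-entries of $\mathbf{Q}_l$ and $\mathbf{Q}$, which decreases to $1$ since $\lambda_l\downarrow1$ and $f$ is increasing with $f(1)=1$. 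Fixing $\rho_{l_0}$, a cylinder $A=[b]_{-n}^{n}$ and $\epsilon>0$, it is enough, since cylinders generate $\cB$, to produce a single shift $N$ with $\mu(A\cap T^{-N}A\cap\{|(T^{N})'-\rho_{l_0}|<\epsilon\})>0$.

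This last point is where I expect the real work, because the shift $N$ must accomplish three things at once. First, the derivative must land near $\rho_{l_0}$: the cylinder freezes finitely many factors of the product, and the lattice conditions (\ref{eq: Lattice condition on lambda}) and (\ref{eq: n_t is very large w.r.t lattic condition}) ensure that every quantum $f(\lambda_j)$ is an integer power of the finest one available in the block used, so the frozen contribution can be compensated and the target value hit exactly. Second, a definite proportion of $x\in A$ must realize the count of state-$1$ visits on which $(T^{N})'$ depends (only the first row of $\mathbf{Q}_l$ is perturbed); this is furnished by the ergodic-theorem estimates (\ref{Ergodic theorem cond 1}) and (\ref{eq:Ergodic theorem cond 2}), the latter supplying enough $2\to3$ transitions to steer the shifted orbit back onto the pattern $b$. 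Third, the return event must not be negligible: I would space the trial shifts $N$ at least the mixing time $k_l$ apart inside the long $\mathbf{Q}$-block, so that by (\ref{eq: Mixing time}) and the near-stationarity (\ref{eq: almost stationary dist. for Q}) they act like $\approx m_l/4k_l$ independent trials, each succeeding with probability at least $9^{-3N_l}$; then (\ref{m_l large so that the sum of bad events is small}) pushes the probability that every trial fails below $1/l$, so that a subset of $A$ of measure close to $\mu(A)$ returns to $A$ with derivative within $\epsilon$ of $\rho_{l_0}$. The genuine obstacle is exactly this simultaneous coordination of the lattice (to hit the value), the ergodic theorem (to keep the derivative nearly constant on a large set), and mixing with many near-independent trials (to force a positive-measure return). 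Once each $\rho_l\in R(T)$, the subgroup dichotomy gives $R(T)=[0,\infty)$, i.e.\ type ${\rm III}_1$; in particular $0\in R(T)$, so no $\sigma$-finite absolutely continuous invariant measure exists.
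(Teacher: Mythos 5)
Your treatment of non-singularity (via Corollary \ref{Non singularity for Markov shifts} and summability of $(\lambda_l-1)^2$), of the $K$-property (via Proposition \ref{prop:Exactness of Markov shifts} with the uniform lower bound on three-step transition products), and of conservativity (Hopf's criterion with the lower bound $(T^{N})'\geq\lambda_1^{-2N_l}$ on the long $\mathbf{Q}$-blocks, together with (\ref{m_l is large enough for shift conservative})) coincides with the paper's proof, and your overall strategy for type ${\rm III}_1$ --- showing $\rho_l=\lambda_l\frac{1+\varphi}{1+\varphi\lambda_l}\in R(T)$, $\rho_l\downarrow 1$, and invoking the closed-subgroup structure of the ratio set --- is also the paper's (Theorem \ref{thm: lambdas are in R(T)}). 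However, there is a genuine gap in your ratio-set argument, at the reduction step. It is \emph{not} true that, ``since cylinders generate $\cB$,'' it suffices to produce for each cylinder $A=[b]_{-n}^{n}$ a single shift $N$ with $\mu\left(A\cap T^{-N}A\cap\left\{\left|\left(T^{N}\right)'-\rho\right|<\epsilon\right\}\right)>0$. The definition of $R(T)$ quantifies over \emph{all} positive-measure sets, and positivity on cylinders does not transfer: for a general $A$ one picks a cylinder $\mathfrak{b}$ with $\mu\left(A\cap\mathfrak{b}\right)>0.99\mu\left(\mathfrak{b}\right)$, but a good-return event of merely positive measure inside $\mathfrak{b}$ could lie entirely in $\mathfrak{b}\setminus A$.

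Your mixing/independence argument does give the correct quantitative cylinder statement (the paper's Lemma \ref{EVC condition}: the union over the $\sim m_t/4k_t$ trial shifts of good-return events covers at least $0.8\mu\left(\mathfrak{b}\right)$), but passing from this to a general $A$ is exactly where the remaining work lies, and your sketch omits it. A union bound fails: if at each trial shift the good return happened to land in $\mathfrak{b}\setminus A$, the excluded measure can only be estimated by $\left(m_t/4k_t\right)\cdot\mu\left(\mathfrak{b}\setminus A\right)$, which is useless since $t$ (hence the number of trials) is chosen \emph{after} $\mathfrak{b}$; and no single trial shift carries a uniform proportion of $\mu\left(\mathfrak{b}\right)$, since each individual good-return event has measure of order $3^{-3N_t}\mu\left(\mathfrak{b}\right)$. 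The paper's resolution is to define, on the good set $B$, the first-good-return map $S=T^{\phi}$ and to prove it is \emph{injective} with $\left(T^{\phi}\right)'\in\left[1,2\right]$; injectivity plus the derivative bound give $\mu\left(S^{-1}\left(\mathfrak{b}\setminus A\right)\right)\leq 2\mu\left(\mathfrak{b}\setminus A\right)$ with no factor of the number of trials, whence $\mu\left(B\cap A\cap S^{-1}A\right)>0$, and pigeonholing over the finitely many values of $\phi$ produces one shift $N$ that works for $A$ itself. Injectivity is not automatic: it forces the modification of the correcting word $d(c)$ by concatenating the block $c$ as a marker (the remark following Lemma \ref{Main step for EVC. }), because distinct cylinders with equal counts $L_t,V_t$ would otherwise yield the same correcting word. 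This coordination --- quantitative cylinder lemma, marker-induced injectivity, and distortion control of the return map --- is the step your proposal does not supply, and without it the argument does not close.
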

\begin{proof}
\textbf{{[}Non Singularity and $K$ property{]}}

Since $\mu\circ T$ is the markovian measure generated by $\tilde{P}_{j}=P_{j-1}$
and $\tilde{\pi}_{j}=\pi_{j-1}$, it follows from (\ref{eq: Non singularity condition.})
and the block structure of $P_{j}$ that the shift is non singular
if and only if 
\begin{eqnarray*}
 &  & \sum_{t=1}^{\infty}\sum_{s\in S}\left\{ \left(\sqrt{P_{N_{t}}\left(x_{N_{t}},s\right)}-\sqrt{P_{N_{t}-1}\left(x_{N_{t}},s\right)}\right)^{2}\right.\\
 &  & \ \ \ \ \ \left.+\left(\sqrt{P_{M_{t}}\left(x_{M_{t}},s\right)}-\sqrt{P_{M_{t}-1}\left(x_{M_{t}},s\right)}\right)^{2}\right\} <\infty,\ \mu\circ T\ a.s.\ x.
\end{eqnarray*}
Since for all $j\in\ZZ$, $P_{j}\left(3,2\right)\equiv1,\ P_{j}\left(2,1\right)=1-P_{j}\left(2,3\right)\equiv\frac{\varphi}{1+\varphi}$,
the sum is dominated by 
\begin{eqnarray*}
\sum_{k=1}^{\infty}\sum_{s\in S}\left\{ 2\left(\sqrt{P_{N_{k}}\left(1,s\right)}-\sqrt{P_{N_{k}-1}\left(1,s\right)}\right)^{2}\right\}  & = & 2\sum_{k=1}^{\infty}\left\{ \left(\sqrt{\frac{\lambda_{k}\varphi}{1+\lambda_{k}\varphi}}-\sqrt{\frac{\varphi}{1+\varphi}}\right)^{2}\right.\\
 &  & \left.+\left(\sqrt{\frac{1}{1+\lambda_{k}\varphi}}-\sqrt{\frac{1}{1+\varphi}}\right)^{2}\right\} .
\end{eqnarray*}
This sum converges or diverges together with ${\displaystyle \sum_{k=1}^{\infty}\left|\lambda_{k}-1\right|^{2}}$.
As a consequence of condition (\ref{eq:condition on Lambda}) on $\left\{ \lambda_{j}\right\} $,
this sum is finite. Since $P_{j}\equiv\mathbf{Q}$ for all $j\leq0$,
\[
T'(x)=\frac{d\mu\circ T}{d\mu}(x)=\prod_{k=1}^{\infty}\frac{P_{k-1}\left(x_{k},x_{k+1}\right)}{P_{k}\left(x_{k},x_{k}+1\right)}.
\]
The sequence $\left\{ P_{j}\right\} _{j\in\ZZ}$ satisfies 
\[
\inf\left[\left(P_{j}P_{j+1}P_{j+2}\right)(s,t):\ s,t\in\left\{ 1,2,3\right\} ,j\in\mathbb{Z}\right]:=c>0,
\]
thus by Proposition \ref{prop:Exactness of Markov shifts} the one
sided shift $\left(\{1,2,3\}^{\NN},\mathcal{F},\mu_{+},\sigma\right)$
is an exact factor and $T'$ is $\mathcal{F}$ measurable and thus
the shift is a $K$ automorphism. Here $\mu_{+}$ denotes the measure
on the one sided shift space defined by $\left\{ \pi_{j},P_{j}\right\} _{j\geq1}$.
\end{proof}
In order to show the other properties of the Markov Shift, we will
need a more concrete expression of the Radon Nykodym derivatives.
The measure $\mu$, or more concretely it's transition matrices, differs
from the stationary $\left\{ \pi_{\mathbf{Q}},\mathbf{Q}\right\} $
measure only when one moves inside state $1$ in the segments $\left[M_{j},N_{j+1}\right)$.
Denote by 
\[
L_{j}(x):=\#\left\{ k\in\left[M_{j-1},N_{j}\right):\ x_{k}=1\right\} 
\]
and 
\[
V_{j}(x)=\#\left\{ k\in\left[M_{j-1},N_{j}\right):\ x_{k}=x_{k+1}=1\right\} .
\]

\begin{lem}
\label{lem: R_N derivatives}For every $\epsilon>0$, there exists
$t_{0}\in\NN$ s.t for every $t>t_{0}$, $N_{t}\leq n<m_{t}$ and
$x\in\{1,2,3\}^{\ZZ}$, 
\[
\left(T^{n}\right)'(x)=(1\pm\epsilon)\prod_{k=1}^{t}\left[\left(\frac{1+\varphi}{1+\varphi\lambda_{k}}\right)^{L_{k}\circ T^{n}(x)-L_{k}(x)}\cdot\lambda_{k}^{V_{k}\circ T^{n}(x)-V_{k}(x)}\right].
\]
 \end{lem}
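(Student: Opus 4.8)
The plan is to start from the explicit product formula for the Radon--Nikodym cocycle supplied by Corollary \ref{Non singularity for Markov shifts}, namely
\[
\left(T^{n}\right)'(x)=\prod_{k=0}^{\infty}\frac{P_{k-n}\left(x_{k},x_{k+1}\right)}{P_{k}\left(x_{k},x_{k+1}\right)},
\]
and to convert the statement into the control of a sum of logarithms, organized block by block. The structural fact driving everything is that each $P_{j}$ agrees with $\mathbf{Q}$ except when $j$ lies in a perturbation block $[M_{l-1},N_{l})$, and even there they differ only in the row of transitions out of state $1$ (the rows for $2,3$ are identical in $\mathbf{Q}$ and in $\mathbf{Q_{l}}$). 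Thus, setting $g_{j}(s,t):=\log P_{j}(s,t)-\log\mathbf{Q}(s,t)$, one has $g_{j}\equiv0$ unless $j\in[M_{l-1},N_{l})$ and $s=1$, in which case $g_{j}(1,1)=\log\lambda_{l}+\log\frac{1+\varphi}{1+\varphi\lambda_{l}}$ and $g_{j}(1,3)=\log\frac{1+\varphi}{1+\varphi\lambda_{l}}$ (from state $1$ only the transitions to $1$ and to $3$ are admissible, since $\mathbf{Q}(1,2)=0$).

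Taking logarithms in the product formula and inserting $\pm\log\mathbf{Q}$ term by term rewrites $\log\left(T^{n}\right)'(x)$ as $\sum_{k\ge0}\bigl[g_{k-n}(x_{k},x_{k+1})-g_{k}(x_{k},x_{k+1})\bigr]$. First I would evaluate $\sum_{k}g_{k}(x_{k},x_{k+1})$: grouping the nonzero terms by block $l$, the positions $k\in[M_{l-1},N_{l})$ with $x_{k}=1$ split into the $V_{l}(x)$ positions with $x_{k+1}=1$ and the remaining $L_{l}(x)-V_{l}(x)$ positions with $x_{k+1}=3$, collapsing the block-$l$ total to exactly $V_{l}(x)\log\lambda_{l}+L_{l}(x)\log\frac{1+\varphi}{1+\varphi\lambda_{l}}$. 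Reindexing $j=k-n$ in the other sum and using $g_{j}\equiv0$ for $j\le0$ shows $\sum_{k}g_{k-n}(x_{k},x_{k+1})=\sum_{j\ge1}g_{j}\bigl((T^{n}x)_{j},(T^{n}x)_{j+1}\bigr)$, i.e.\ the same expression with $L_{l},V_{l}$ replaced by $L_{l}\circ T^{n},V_{l}\circ T^{n}$. Exponentiating the difference yields the exact identity
\[
\left(T^{n}\right)'(x)=\prod_{l=1}^{\infty}\left[\lambda_{l}^{V_{l}\circ T^{n}(x)-V_{l}(x)}\left(\frac{1+\varphi}{1+\varphi\lambda_{l}}\right)^{L_{l}\circ T^{n}(x)-L_{l}(x)}\right],
\]
so it remains only to show that the factor coming from $l>t$ lies in $(1-\epsilon,1+\epsilon)$.

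For the tail I would use that $L_{l},V_{l}$ count occurrences inside the fixed window $[M_{l-1},N_{l})$ of length $n_{l}$, while $L_{l}\circ T^{n},V_{l}\circ T^{n}$ count the same patterns inside the translated window $[M_{l-1}+n,N_{l}+n)$; two windows of equal length offset by $n$ differ on each side in at most $\min(n,n_{l})$ positions, giving the $x$-uniform bounds $\left|L_{l}\circ T^{n}(x)-L_{l}(x)\right|\le\min(n,n_{l})\le n$ and likewise for $V_{l}$. Since the $m_{l}$ are increasing, for $l\ge t+1$ the hypothesis $n<m_{t}$ forces $n<m_{l-1}$. Combining this with the elementary inequality $\bigl|\log\frac{1+\varphi}{1+\varphi\lambda_{l}}\bigr|=\log\frac{1+\varphi\lambda_{l}}{1+\varphi}\le\log\lambda_{l}$ (valid whenever $\lambda_{l}\ge1$, because $1+\varphi\lambda_{l}\le\lambda_{l}(1+\varphi)$), the block-$l$ contribution to the tail logarithm is at most $2m_{l-1}\log\lambda_{l}$, which condition (\ref{eq:condition on Lambda}), $\lambda_{l}^{2m_{l-1}}<e^{1/2^{l}}$, bounds by $2^{-l}$. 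Hence the whole tail logarithm has absolute value at most $\sum_{l>t}2^{-l}=2^{-t}$, uniformly in $x$ and in the range $N_{t}\le n<m_{t}$ (only the upper bound $n<m_{t}$ enters this estimate). Choosing $t_{0}$ so that $2^{-t_{0}}$ is small enough that $e^{\pm2^{-t}}\subset(1-\epsilon,1+\epsilon)$ for all $t>t_{0}$ then completes the proof.

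I expect the main obstacle to be the bookkeeping of the second paragraph rather than the analytic estimate: getting the split of $L_{l}$ into its $V_{l}$ and $L_{l}-V_{l}$ parts exactly right, including the boundary index $k=N_{l}-1$ at which $x_{k+1}=x_{N_{l}}$ lies outside the window (already accommodated by the definition of $V_{l}$), and comparing the window $[M_{l-1},N_{l})$ with its translate carefully enough that the discrepancy is genuinely $\min(n,n_{l})$ and not something larger. Once these combinatorial identities are pinned down, the tail bound is a direct application of (\ref{eq:condition on Lambda}).
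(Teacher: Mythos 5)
Your strategy is essentially the paper's: both proofs identify the contribution of each perturbation block $[M_{l-1},N_{l})$ to the cocycle through the counts $L_{l},V_{l}$, and both kill the blocks $l>t$ using $n<m_{t}\le m_{l-1}$ together with condition (\ref{eq:condition on Lambda}); your tail bound ($2\min(n,n_{l})\log\lambda_{l}\le2m_{l-1}\log\lambda_{l}\le2^{-l}$ per block) is precisely the paper's estimate of its tail term $\tilde I_{t}$. The genuine flaw is in your middle step, where you split $\log\left(T^{n}\right)'(x)=\sum_{k\ge0}\bigl[g_{k-n}(x_{k},x_{k+1})-g_{k}(x_{k},x_{k+1})\bigr]$ into the two series $\sum_{k}g_{k-n}$ and $\sum_{k}g_{k}$ and ``evaluate'' each block by block. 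These two series diverge in general, so subtracting their ``values'' and exponentiating is not a valid derivation of your exact identity. Concretely, take $x\equiv1$ (admissible, since $\mathbf{A}_{1,1}=1$): the block-$l$ contribution to $\sum_{k}g_{k}(x_{k},x_{k+1})$ is $n_{l}\log\bigl(\lambda_{l}\frac{1+\varphi}{1+\varphi\lambda_{l}}\bigr)$, and the choice of $n_{l}$ in (\ref{eq: n_t is very large w.r.t lattic condition}) forces $p(1,l)\le n_{l}/20$, i.e. $n_{l}\log\bigl(\lambda_{l}\frac{1+\varphi}{1+\varphi\lambda_{l}}\bigr)\ge20\log\bigl(\lambda_{1}\frac{1+\varphi}{1+\varphi\lambda_{1}}\bigr)>0$ for every $l$, so this series is $+\infty$. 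The underlying point is that condition (\ref{eq:condition on Lambda}) controls $\lambda_{l}^{m_{l-1}}$, never $\lambda_{l}^{n_{l}}$; only \emph{differences} of counts over windows offset by $n<m_{l-1}$ are small, not the counts themselves, so each sum separately is meaningless while their combination is fine.

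The gap is repairable, and the repair is in substance what the paper does. Observe that whenever $g_{k-n}(x_{k},x_{k+1})$ and $g_{k}(x_{k},x_{k+1})$ are both nonzero they are equal: if $k\in[M_{l-1},N_{l})$ with $l>t$, then $n<m_{t}\le m_{l-1}$ gives $N_{l-1}<k-n<N_{l}$, so $k-n$ can only lie in the same perturbation block $l$, and both terms equal $\log\mathbf{Q_{l}}(x_{k},x_{k+1})-\log\mathbf{Q}(x_{k},x_{k+1})$; while for $l\le t$ one has $k<N_{t}\le n$, hence $g_{k-n}=0$. Thus every nonvanishing term of the combined series involves exactly one of $g_{k-n},g_{k}$, there are at most $2\min(n,n_{l})$ such terms per block, each of size $O(\log\lambda_{l})$, so the combined series is absolutely convergent and may legitimately be regrouped block by block, which produces your identity. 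The paper sidesteps the issue from the outset by writing $\left(T^{n}\right)'=I_{t}\cdot\tilde I_{t}$: the finitely many blocks $l\le t$ are evaluated exactly (a finite product, no convergence question arises), and the tail blocks are only bounded between $\lambda_{u}^{-2n}$ and $\lambda_{u}^{2n}$, never evaluated, so a sum of $g_{k}$ over an entire block never appears.
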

\begin{proof}
Let $\epsilon>0$, $t\in\NN$ and $N_{t}\leq n<m_{t}$. Canceling
out all the $k's$ such that $P_{k-n}=P_{k}$ one can see that 
\begin{eqnarray*}
\left(T^{n}\right)'(x) & = & I_{t}\cdot\tilde{I_{t}}
\end{eqnarray*}
 where 
\[
I_{t}=\prod_{u=1}^{t}\left[\left(\prod_{k=M_{u-1}}^{N_{u}}\frac{P_{k-n}\left(x_{k},x_{k+1}\right)}{P_{k}\left(x_{k},x_{k+1}\right)}\right)\cdot\left(\prod_{k=M_{u-1}+n}^{N_{u}+n-1}\frac{P_{k-n}\left(x_{k},x_{k+1}\right)}{P_{k}\left(x_{k},x_{k+1}\right)}\right)\right]
\]
and 
\[
\tilde{I}_{t}=\prod_{u=t+1}^{\infty}\left[\left(\prod_{k=M_{u-1}}^{M_{u-1}+n-1}\frac{P_{k-n}\left(x_{k},x_{k+1}\right)}{P_{k}\left(x_{k},x_{k+1}\right)}\right)\cdot\left(\prod_{k=N_{u}}^{N_{u}+n-1}\frac{P_{k-n}\left(x_{k},x_{k+1}\right)}{P_{k}\left(x_{k},x_{k+1}\right)}\right)\right].
\]
We will analyze the two terms separately. Since for every $M_{u-1}\leq k<M_{u-1}+n$,
$P_{k}=\mathbf{Q_{u}}$ and $P_{k-n}=\mathbf{Q}$, 
\[
\frac{P_{k-n}\left(x_{k},x_{k+1}\right)}{P_{k}\left(x_{k},x_{k+1}\right)}\leq\frac{{\rm \mathbf{Q}_{1,3}}}{{\rm \left(\mathbf{Q_{u}}\right)}_{1,3}}=\frac{1+\varphi\lambda_{u}}{1+\varphi}\leq\lambda_{u}.
\]
Similarly for $N_{u}\leq k<N_{u}+n$, $P^{(k)}=\mathbf{Q}$ and $P_{k-n}={\rm \mathbf{Q_{u}}}$
. Therefore 
\[
\frac{P_{k-n}\left(x_{k},x_{k+1}\right)}{P_{k}\left(x_{k},x_{k+1}\right)}\leq\frac{{\rm \left(\mathbf{Q_{u}}\right)_{1,1}}}{\mathbf{Q}_{1,1}}\leq\lambda_{u}.
\]
and 
\[
\lambda_{u}^{-2n}\leq\left(\prod_{k=M_{u-1}}^{M_{u-1}+n-1}\frac{P_{k-n}\left(x_{k},x_{k+1}\right)}{P_{k}\left(x_{k},x_{k+1}\right)}\right)\cdot\left(\prod_{k=N_{u}}^{N_{u}+n-1}\frac{P_{k-n}\left(x_{k},x_{k+1}\right)}{P_{k}\left(x_{k},x_{k+1}\right)}\right)\leq\lambda_{u}^{2n},
\]
here the lower bound is achieved by a similar analysis. This gives
\begin{eqnarray*}
\tilde{I}_{t} & = & \prod_{u=t+1}^{\infty}\left[\lambda_{u}^{\pm2n}\right]\\
 & = & \prod_{u=t+1}^{\infty}\left[\lambda_{u}^{\pm2m_{u-1}}\right]\quad\text{(since\ \ensuremath{\forall u>t},\ \ensuremath{n<m_{t}<m_{u})}}\\
 & \overset{\eqref{eq:condition on Lambda}}{=} & e^{\pm\sum_{n=t+1}^{\infty}\frac{1}{2^{n}}}\xrightarrow[t\to\infty]{}1.
\end{eqnarray*}
Consequently there exists $t_{0}\in\NN$ so that for all $x\in\Sigma_{{\bf A}}$,
for all $t>t_{0}$ and $N_{t}\leq n\leq m_{t}$,
\[
\left(T^{n}\right)'(x)=(1\pm\epsilon)I_{t}.
\]
By noticing that for $k\in\bigcup_{j=1}^{t}\left(\left[M_{j-1},N_{j}\right)\cup\left[M_{j-1}+n,N_{j}+n\right)\right),$
\[
P_{k-n}\left(x_{k},x_{k+1}\right)\neq P_{k}\left(x_{k},x_{k+1}\right)
\]
if and only if $x_{k}=1$ one can check that 
\[
I_{t}=\prod_{k=1}^{t}\left[\left(\frac{1+\varphi}{1+\varphi\lambda_{k}}\right)^{L_{k}\circ T^{n}(x)-L_{k}(x)}\lambda_{k}^{V_{k}\circ T^{n}(x)-V_{k}(x)}\right].
\]
\end{proof}
\begin{cor}
The shift $\left(\{1,2,3\}^{\ZZ},\mu,T\right)$ is conservative and
ergodic.\end{cor}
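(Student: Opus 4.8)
The plan is to obtain ergodicity from conservativity: the Theorem's proof already establishes that $T$ is a $K$-automorphism, so by the cited result that conservative $K$-automorphisms are ergodic (Section~\ref{sec:Prelminary-definition-and}) it suffices to prove that $T$ is conservative. By Hopf's criterion this amounts to showing $\sum_{n=1}^{\infty}\left(T^{n}\right)'(x)=\infty$, and I will in fact prove this for \emph{every} $x\in\{1,2,3\}^{\ZZ}$ by producing, for each large $t$, a whole window of indices $n$ on which the derivative admits a uniform lower bound.

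Fix $\epsilon=\tfrac12$ and let $t_{0}$ be as in Lemma~\ref{lem: R_N derivatives}. For $t>t_{0}$ and $N_{t}\le n<m_{t}$ the Lemma writes $\left(T^{n}\right)'(x)=(1\pm\epsilon)\prod_{k=1}^{t}\beta_{k}^{a_{k}}\lambda_{k}^{b_{k}}$, where $\beta_{k}=\frac{1+\varphi}{1+\varphi\lambda_{k}}$, $a_{k}=L_{k}\circ T^{n}(x)-L_{k}(x)$ and $b_{k}=V_{k}\circ T^{n}(x)-V_{k}(x)$. I would bound each factor from below uniformly in $x$ and $n$. Since $L_{k}$ and $V_{k}$ count sites in a window of length $n_{k}=N_{k}-M_{k-1}$, the exponents satisfy $|a_{k}|,|b_{k}|\le n_{k}$; since $\lambda_{k}\ge1$ we get $\lambda_{k}^{b_{k}}\ge\lambda_{k}^{-n_{k}}$; and since a one-line computation from $\lambda_{k}>1$ gives $\lambda_{k}^{-1}<\beta_{k}\le1$, we get $\beta_{k}^{a_{k}}\ge\beta_{k}^{n_{k}}>\lambda_{k}^{-n_{k}}$. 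Using $\lambda_{k}\le\lambda_{1}$ and $\sum_{k=1}^{t}n_{k}\le N_{t}$ this yields the crude but uniform estimate
\[
\left(T^{n}\right)'(x)\ge(1-\epsilon)\prod_{k=1}^{t}\lambda_{1}^{-2n_{k}}\ge(1-\epsilon)\,\lambda_{1}^{-2N_{t}}.
\]

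Next I would sum over the admissible windows. The intervals $[N_{t},m_{t})$ are nonempty (condition (\ref{m_l is large enough for shift conservative}) forces $m_{t}>N_{t}$) and pairwise disjoint, since $N_{t+1}=N_{t}+m_{t}+n_{t+1}>m_{t}$. Hence
\[
\sum_{n=1}^{\infty}\left(T^{n}\right)'(x)\ge\sum_{t>t_{0}}\sum_{n=N_{t}}^{m_{t}-1}\left(T^{n}\right)'(x)\ge(1-\epsilon)\sum_{t>t_{0}}\left(m_{t}-N_{t}\right)\lambda_{1}^{-2N_{t}}\ge(1-\epsilon)\sum_{t>t_{0}}1=\infty,
\]
where the last inequality is precisely condition (\ref{m_l is large enough for shift conservative}). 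Thus $T$ is conservative by Hopf's criterion, and conservativity together with the $K$-property gives ergodicity.

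The computation is short once the right range of indices is used, so I do not expect a serious obstacle; the only point requiring care is the uniform lower bound on $\left(T^{n}\right)'$. There the exponents $a_{k},b_{k}$ depend on $x$ and may a priori have either sign, so one must bound $\beta_{k}^{a_{k}}\lambda_{k}^{b_{k}}$ by its worst case over $|a_{k}|,|b_{k}|\le n_{k}$; the monotonicity of $\{\lambda_{k}\}$ then collapses everything to powers of $\lambda_{1}$, and condition (\ref{m_l is large enough for shift conservative}) was designed exactly so that each block $[N_{t},m_{t})$ contributes at least the constant $1-\epsilon$ to Hopf's sum.
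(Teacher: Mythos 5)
Your proposal is correct and follows essentially the same route as the paper's own proof: reduce ergodicity to conservativity via the $K$-property, use Lemma~\ref{lem: R_N derivatives} to get the uniform lower bound $\left(T^{n}\right)'(x)\gtrsim\lambda_{1}^{-2N_{t}}$ on the windows $N_{t}\le n<m_{t}$ (by the same worst-case estimate $\beta_{k}^{a_{k}}\lambda_{k}^{b_{k}}\ge\lambda_{k}^{-2n_{k}}\ge\lambda_{1}^{-2n_{k}}$), and then sum over the disjoint windows so that condition~(\ref{m_l is large enough for shift conservative}) makes Hopf's series diverge. The only differences are cosmetic (fixing $\epsilon=\tfrac12$ and spelling out the disjointness of the windows), so there is nothing to add.
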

\begin{proof}
Since the shift is a $K$-automorphism it is enough to prove conservativity. 

For every $j\in\NN$, $0\leq L_{k}(x),D_{k}(x)\leq n_{k}$. Whence
\begin{eqnarray*}
\left(\frac{1+\varphi}{1+\varphi\lambda_{k}}\right)^{L_{k}\circ T^{n}(x)-L_{k}(x)}\lambda_{k}^{V_{k}\circ T^{n}(x)-V_{k}(x)} & \geq & \left(\frac{1+\varphi}{1+\varphi\lambda_{k}}\right)^{L_{k}\circ T^{n}(x)}\lambda_{k}^{-V_{k}(x)}\\
 & \geq & \lambda_{k}^{-2n_{k}}\geq\lambda_{1}^{-2n_{k}},
\end{eqnarray*}
and for every $t\in\NN$, 
\[
\prod_{k=1}^{t}\left[\left(\frac{1+\varphi}{1+\varphi\lambda_{k}}\right)^{L_{k}\circ T^{n}(x)-L_{k}(x)}\lambda_{k}^{V_{k}\circ T^{n}(x)-V_{k}(x)}\right]\geq\lambda_{1}^{-2\sum_{k=1}^{t}n_{k}}\geq\lambda_{1}^{-2N_{t}}.
\]
By Lemma \ref{lem: R_N derivatives} there exists $t_{0}\in\NN$ such
that for all $t\geq t_{0}$, $N_{t}\leq n\leq m_{t}$ and $x\in\Sigma_{{\bf A}}$,
$\RN n(x)\geq\frac{\lambda_{1}^{-2N_{t}}}{2}$. Therefore for all
$x\in\Sigma_{{\bf A}}$,
\[
\sum_{n=1}^{\infty}\RN n(x)\geq\sum_{t=1}^{\infty}\sum_{n=N_{t}}^{m_{t}}\RN n(x)\geq\sum_{t=t_{0}}^{\infty}\frac{1}{2}\left(m_{t}-N_{t}\right)\lambda_{1}^{-2N_{t}}\overset{\eqref{m_l is large enough for shift conservative}}{=}\infty.
\]
By Hopf's criteria the shift is conservative. 
\end{proof}

\subsubsection{Proof of the type ${\rm III}_{1}$ property}

In order to prove that the ratio set is $[0,\infty)$ we are going
to use the following principle: since $R(T)$ is a multiplicative
subset it is enough to show that there exists $y_{n}\in R(T)\backslash\{1\}$
with $y_{n}\to1$ as $n\to\infty$. 
\begin{thm}
\label{thm: lambdas are in R(T)}Let $\mu$ be the Markovian measure
constructed in Subsection \ref{sub:The-construction.}. For every
$n\in\NN$, $\lambda_{n}\cdot\frac{1+\varphi}{1+\varphi\lambda_{n}}\in R(T)$
and therefore the shift is type ${\rm III}_{1}$.
\end{thm}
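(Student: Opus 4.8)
The plan is to show that each number $r_n:=\lambda_n\frac{1+\varphi}{1+\varphi\lambda_n}$ lies in the ratio set $R(T)$; since $\lambda_n\downarrow 1$ we have $r_n\downarrow 1$, so $R(T)$ contains elements $\neq 1$ arbitrarily close to $1$. As $R(T)\cap(0,\infty)$ is a closed multiplicative subgroup of $(0,\infty)$ which, by conservativity and Maharam's theorem, contains $1$, this forces $R(T)\supseteq(0,\infty)$; and since there is then no absolutely continuous invariant measure we also get $0\in R(T)$, i.e. $R(T)=[0,\infty)$ and $T$ is of type ${\rm III}_1$. Because cylinders generate $\mathcal{B}$, to verify $r_n\in R(T)$ it suffices to produce, for every cylinder $A=[b]_{-m}^{m}$ of positive measure and every $\epsilon>0$, an integer $N$ with
\[
\mu\!\left(A\cap T^{-N}A\cap\bigl\{x:\ |\RN{N}(x)-r_n|<\epsilon\bigr\}\right)>0 .
\]

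To evaluate $\RN{N}$ I would fix a large $t$ (with $t>n$ and $N_t>m$) and restrict attention to shifts $N\in[N_t,m_t)$, so that Lemma \ref{lem: R_N derivatives} applies and
\[
\RN{N}(x)=(1\pm\epsilon)\prod_{k=1}^{t}\Bigl(\tfrac{1+\varphi}{1+\varphi\lambda_k}\Bigr)^{L_k\circ T^{N}(x)-L_k(x)}\lambda_k^{V_k\circ T^{N}(x)-V_k(x)} .
\]
The key algebraic point is that a block $k$ whose number of $1\to3$ transitions is unchanged by $T^N$ (i.e. $L_k\circ T^{N}-L_k=V_k\circ T^{N}-V_k$) contributes exactly $r_k^{\,V_k\circ T^{N}-V_k}$, a power of $r_k$; and by the lattice conditions \eqref{eq: Lattice condition on lambda}--\eqref{eq: n_t is very large w.r.t lattic condition} every $r_k$ is an integer power $r_t^{p(k,t)}$ of $r_t$. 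Thus, provided the remaining blocks cancel, the product collapses to a single power $r_t^{P}$, and since $r_n=r_t^{p(n,t)}$ it suffices to arrange the block-count changes so that $P=p(n,t)$.

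The construction of $N$ and of the positive-measure set on which this occurs is the technical heart. I would choose $N$ so that the shifted perturbed blocks fall inside the stationary ($\mathbf{Q}$) buffer $[N_t,M_t)$, where by \eqref{eq: almost stationary dist. for Q} the chain is essentially in its stationary regime. The ergodic-theorem inputs \eqref{Ergodic theorem cond 1} and \eqref{eq:Ergodic theorem cond 2} guarantee that, with overwhelming probability, every block carries $\approx n_k/\sqrt5$ ones and at least $n_k/15$ occurrences of the pattern $23$; the former makes the bulk contribution of each block cancel between $x$ and $T^{N}x$ (so the exponents are small fluctuations rather than of order $n_k$), while the latter supplies many disjoint run-boundaries at which the number of ones can be adjusted without altering the number of $1\to3$ transitions. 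I would then subdivide the buffer into $\approx m_t/4k_t$ windows of a few mixing lengths each; by \eqref{eq: Mixing time} these behave as approximately independent trials, and in each the desired local configuration (the one producing the required net change $P=p(n,t)$ in the $11$-count) occurs with probability at least $9^{-3N_t}$. The many-trials estimate \eqref{m_l large so that the sum of bad events is small} then shows that at least one trial succeeds off a set of measure $\le 1/t$, so the target event has positive measure; intersecting with $A$ and $T^{-N}A$ stays positive because $A$ sits in the window $[-m,m]$, decoupled from the far-away buffer by the same mixing estimate \eqref{eq: Mixing time}.

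The main obstacle is exactly this simultaneous balancing. On the one hand I must make $\RN{N}$ land within $\epsilon$ of $r_n$, which forces all perturbed blocks other than the designed contribution to cancel to high precision — this uses the stability of the $1$-counts from \eqref{Ergodic theorem cond 1} together with the tail bound \eqref{eq:condition on Lambda} controlling the blocks beyond index $t$ (the factor $\tilde{I}_t\to 1$ of Lemma \ref{lem: R_N derivatives}). On the other hand the event $\{\RN{N}\approx r_n\}$ is a rare moderate deviation, since for a generic shift the typical value of $\RN{N}$ is $1$; positivity of its measure therefore cannot come from typicality and must be extracted from the long buffer via the approximate independence \eqref{eq: Mixing time} and the trial count \eqref{m_l large so that the sum of bad events is small}. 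Reconciling the rigidity of ``the net $11$-count changes by exactly $p(n,t)$'' with strictly positive measure of the triple intersection $A\cap T^{-N}A\cap\{\RN{N}\approx r_n\}$ is where the delicate part of the argument lies.
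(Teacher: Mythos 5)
Your cylinder-level machinery (Lemma \ref{lem: R_N derivatives}, the lattice trick collapsing the product to a power of $\lambda_t\frac{1+\varphi}{1+\varphi\lambda_t}$, the mixing-window trials) follows the paper's outline, but your opening reduction is a genuine gap, and it sits exactly where the paper does its hardest work. It is \emph{not} sufficient, in order to prove $r_n\in R(T)$, to produce for each cylinder $[b]_{-m}^{m}$ and each $\epsilon>0$ one time $N$ with $\mu\left([b]_{-m}^{m}\cap T^{-N}[b]_{-m}^{m}\cap\left\{ \left|\RN N-r_n\right|<\epsilon\right\} \right)>0$. The ratio-set condition quantifies over \emph{all} sets of positive measure, and bare positivity on a generating family does not pass to arbitrary sets: if $A\in\BB$ is approximated by a cylinder $\mathfrak{b}$ with $\mu(A\cap\mathfrak{b})>(1-\delta)\mu(\mathfrak{b})$, the positive-measure event $E=\mathfrak{b}\cap T^{-N}\mathfrak{b}\cap\left\{ \RN N\approx r_n\right\} $ that your cylinder argument yields may have measure far smaller than $\delta\mu(\mathfrak{b})$, hence may lie entirely inside $\mathfrak{b}\setminus A$; and even where $E$ meets $A$, its points are only guaranteed to return into $\mathfrak{b}$, not into $A$. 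Note also that the set of values attained by $\RN N$ on positive-measure subsets of cylinders is not an invariant of the measure class (it changes under a coboundary $h\circ T^{N}/h$), whereas $R(T)$ is; so a criterion of the form you state cannot characterize membership in $R(T)$.

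What the paper proves instead is quantitative, and is then converted into the ratio-set property by a device absent from your sketch. Lemma \ref{EVC condition} shows that the union $\bigcup_{l\le m_t/4k_t}\mathfrak{RSC}\left(4lk_t,[b]_{-n}^{n},j,\epsilon\right)$ covers at least $0.8\mu\left([b]_{-n}^{n}\right)$ of the cylinder; this uniform covering fraction is what the many approximately independent windows and \eqref{m_l large so that the sum of bad events is small} are really for (for mere positivity on a cylinder a single window already suffices, by one application of \eqref{eq: Mixing time}). One still cannot pass to a general $A$ by picking the $l$ of largest measure, since the number of trials $m_t/4k_t$ depends on the cylinder and yields no uniform constant; the paper instead defines the first-hit map $S=T^{\phi}$ on the union $B$, proves $S$ is one-to-one --- this is the entire purpose of the marker modification $c\mapsto\mathbf{d}(c)$ in the Remark following Lemma \ref{Main step for EVC. } --- and uses $S'\in[1,2]$ to control the two losses $\mu(\mathfrak{b}\setminus A)$, at time $0$ and at the return time, arriving at $\mu\left(A\cap T^{-4lk_t}A\cap\left\{ \RN{4lk_t}\approx r_j\right\} \right)>0$ for some $l$. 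Injectivity is essential here: without it the images under $T^{4lk_t}$ of the disjoint first-hit pieces could all pile up on the small bad set $\mathfrak{b}\setminus A$. A secondary, repairable inaccuracy: the exponents $L_k\circ T^{N}-L_k$ and $V_k\circ T^{N}-V_k$ for $k<t$ are not ``small fluctuations that cancel''; since $\lambda_k^{\,n_k}$ is not close to $1$, fluctuations of unbounded size would destroy the estimate. The paper makes these exponents exactly $0$ (and exactly $p(j,t)$ in block $t$) by conditioning on $[c]_0^{N_t}\in\mathcal{C}(t)$ and performing explicit surgery to define $d$, with \eqref{Ergodic theorem cond 1}--\eqref{eq:Ergodic theorem cond 2} used only to ensure that $\mathcal{C}(t)$ carries most of the measure and that the surgery has room.
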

Fix $n\in\NN$. The first stage in proving that $\lambda_{n}\cdot\frac{1+\varphi}{1+\varphi\lambda_{n}}\in R(T)$
is to show that the ratio set condition is satisfied for all cylinders
with a positive proportion of the measure of the cylinder set. Then
for a general $A\in\BB_{+}$, we use the density of cylinder sets
in $\mathcal{B}$. 

Given $t\in\NN$, denote by $\mathcal{C}(t)$ the collection of all
$[c]_{0}^{N_{t}}$ cylinder sets such that
\begin{equation}
L_{t}(c)=\sum_{k=M_{t-1}}^{N_{t}-1}{\bf 1}_{\left[c_{k}=1\right]}\in\left(\frac{n_{t}}{4},\frac{n_{t}}{2}\right)\text{ and}\ \sum_{k=M_{t-1}}^{N_{t}-1}{\bf 1}_{\left[c_{k}=2,c_{k+1}=3\right]}\geq\frac{n_{t}}{15}.\label{eq: Free bits in C}
\end{equation}
Since 
\[
\mu\left([c]_{M_{t-1}}^{N_{t}}\right)=\nu_{\pi_{M_{t-1}},\mathbf{Q_{t}}}\left([c]_{0}^{n_{t}}\right),
\]
it follows from (\ref{Ergodic theorem cond 1}) and (\ref{eq:Ergodic theorem cond 2})
that for all $t$ large enough, 
\[
\mu\left(\bigcup_{C\in\mathcal{C}(t)}C\right)\geq1-\frac{1}{2t}.
\]
 In order to shorten the notation, given $M,j\in\NN$, $B\in\BB$
and $\epsilon>0$, let 
\[
\mathfrak{RSC}\left(M,B,j,\epsilon\right):=B\cap T^{-M}B\cap\left[\RN M=\lambda_{j}\cdot\frac{1+\varphi}{1+\varphi\lambda_{j}}\cdot\left(1\pm\epsilon\right)\right],
\]
and for $M\in\NN$, 
\[
\Sigma_{{\bf A}}\left(M\right):=\{1,2,3\}^{M}\cap\Sigma_{{\bf A}}.
\]
 
\begin{lem}
\label{Main step for EVC. }For every $[b]_{-n}^{n}$ cylinder set,
$\epsilon>0$ and $j\in\NN$, there exists a $t_{0}\in\NN$ so that
for all $t>t_{0}$ the following holds: 

For every $C=[c]_{0}^{N_{t}-1}\in\mathcal{C}(t)$ there exists $d=d(b,C)\in\Sigma_{{\bf A}}\left(N_{t}+n\right)$
such that for every $\NN\ni l\leq m_{t}/k_{t}$, 
\begin{equation}
C\cap[d]_{lk_{t}-n}^{lk_{t}+N_{t}-1}\subset T^{-lk_{t}}[b]_{-n}^{n}\cap\left[\RN{lk_{t}}=\lambda_{j}\cdot\frac{1+\varphi}{1+\varphi\lambda_{j}}\cdot\left(1\pm\epsilon\right)\right].\label{eq: cylinder for EVC}
\end{equation}
 
\end{lem}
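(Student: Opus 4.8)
The plan is to pin down $\left(T^{lk_{t}}\right)'$ on the set $C\cap[d]_{lk_{t}-n}^{lk_{t}+N_{t}-1}$ as an explicit constant, and then to choose the word $d$ so that this constant is $\lambda_{j}\frac{1+\varphi}{1+\varphi\lambda_{j}}$ while the first $2n+1$ symbols of $d$ spell out $b$. First I would note that for $1\le l\le m_{t}/k_{t}$ one has $k_{t}\le lk_{t}\le m_{t}$, and since $k_{t}>N_{t}$ the translate by $lk_{t}$ of every block $[M_{u-1},N_{u})$ with $u\le t$ lands inside the single $\mathbf{Q}$-segment $[N_{t},M_{t})$. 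Hence Lemma \ref{lem: R_N derivatives} applies uniformly and gives, for $t>t_{0}(\epsilon)$,
\[
\left(T^{lk_{t}}\right)'(x)=(1\pm\epsilon)\prod_{u=1}^{t}\left(\frac{1+\varphi}{1+\varphi\lambda_{u}}\right)^{L_{u}\circ T^{lk_{t}}(x)-L_{u}(x)}\lambda_{u}^{V_{u}\circ T^{lk_{t}}(x)-V_{u}(x)}.
\]
On $x\in C$ the numbers $L_{u}(x),V_{u}(x)$ are the counts $L_{u}(c),V_{u}(c)$ read off $c$; on $x\in[d]_{lk_{t}-n}^{lk_{t}+N_{t}-1}$ the numbers $L_{u}\circ T^{lk_{t}}(x),V_{u}\circ T^{lk_{t}}(x)$ are the $1$-counts and consecutive-$1$-counts of the translated $u$-th block inside $d$, and these do not depend on $l$ because $d$ is a fixed word merely placed at position $lk_{t}-n$. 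Thus the product is one and the same constant on $C\cap[d]_{lk_{t}-n}^{lk_{t}+N_{t}-1}$ for every $l$, and it remains only to engineer $d$.

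The word $d\in\Sigma_{\mathbf{A}}(N_{t}+n)$ I would build in three pieces. Its first $2n+1$ symbols are set equal to $b_{-n},\dots,b_{n}$, which forces the containment $C\cap[d]_{lk_{t}-n}^{lk_{t}+N_{t}-1}\subset T^{-lk_{t}}[b]_{-n}^{n}$; note that the left half $b_{-n},\dots,b_{0}$ sits in a $\mathbf{Q}$-cell and is irrelevant to the derivative, so only the right half interacts with the translated blocks. On the coordinates carrying the translated blocks $[M_{u-1},N_{u})$ with $M_{u-1}>n$ I copy $c$, i.e. $d_{k+n}=c_{k}$, so that $L_{u}\circ T^{lk_{t}}=L_{u}(c)$ and $V_{u}\circ T^{lk_{t}}=V_{u}(c)$ and the corresponding factor is $1$; this is consistent with the $b$-prefix except on the fixed finite family of early blocks meeting $[1,n]$, whose bounded contribution I fold into the tuning of the last block. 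Finally, in the block $[M_{t-1},N_{t})$ I replace $c$ by a modification designed so that its factor equals $\lambda_{j}\frac{1+\varphi}{1+\varphi\lambda_{j}}$.

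The heart of the matter is this last step. By the lattice condition \eqref{eq: Lattice condition on lambda} together with the choice of $n_{t}$ in \eqref{eq: n_t is very large w.r.t lattic condition}, $\lambda_{j}\frac{1+\varphi}{1+\varphi\lambda_{j}}=\bigl(\lambda_{t}\frac{1+\varphi}{1+\varphi\lambda_{t}}\bigr)^{p}$ for an integer $p=p(j,t)\le n_{t}/20$, so it suffices to arrange $L_{t}\circ T^{lk_{t}}-L_{t}=V_{t}\circ T^{lk_{t}}-V_{t}=p$ in the last block. I would produce this by admissible length-preserving substitutions of the type $1^{a}323\mapsto 1^{a+2}3$, each of which is legal for $\mathbf{A}$ (one checks $2\to1,\,1\to1,\,1\to3$) and raises both $L_{t}$ and $V_{t}$ by $2$ without altering the number of $1$-runs. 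The membership $C\in\mathcal{C}(t)$ is exactly what guarantees the room: \eqref{eq: Free bits in C} provides $L_{t}(c)\in(n_{t}/4,n_{t}/2)$ ones and at least $n_{t}/15$ disjoint occurrences of $23$ to act on, whereas $p\le n_{t}/20$, so the $\lceil p/2\rceil$ disjoint substitutions comfortably fit.

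The obstacle I anticipate is matching the target \emph{exactly}. Each interior substitution changes $L_{t}$ by an even amount, so when $p$ is odd I would correct the parity by a single modification at the right boundary $N_{t}$ of the block (altering $x_{N_{t}-1}$), checking admissibility against $\mathbf{A}$ and against the $\mathbf{Q}$-cell that follows; the finitely many early blocks overlapping the $b$-prefix are absorbed in the same boundary accounting. Once such a $d=d(b,C)$ is fixed, the constant value of $\left(T^{lk_{t}}\right)'$ computed in the first paragraph is $\lambda_{j}\frac{1+\varphi}{1+\varphi\lambda_{j}}$ up to $(1\pm\epsilon)$, and, being independent of $l$, the inclusion \eqref{eq: cylinder for EVC} holds simultaneously for every $l\le m_{t}/k_{t}$, which is precisely the assertion of the lemma.
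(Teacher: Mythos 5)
Your skeleton is the paper's: apply Lemma \ref{lem: R_N derivatives}, observe that on $C\cap[d]_{lk_{t}-n}^{lk_{t}+N_{t}-1}$ the derivative is a constant determined by the two words $c,d$ alone (hence independent of $l$), and engineer the last block of $d$ so that $L_{t}(d)-L_{t}(c)=V_{t}(d)-V_{t}(c)=p(j,t)$, which by \eqref{eq: n_t is very large w.r.t lattic condition} yields the value $\lambda_{j}\frac{1+\varphi}{1+\varphi\lambda_{j}}$. The first genuine gap is the clause ``whose bounded contribution I fold into the tuning of the last block''. When $C$ does not intersect $[b]_{-n}^{n}$, the $b$-prefix forces $d\neq c$ on the early blocks inside $[0,n]$, producing fixed parasitic factors $\bigl(\frac{1+\varphi}{1+\varphi\lambda_{u}}\bigr)^{a_{u}}\lambda_{u}^{e_{u}}$ with $a_{u}=L_{u}(b)-L_{u}(c)$, $e_{u}=V_{u}(b)-V_{u}(c)$ and, in general, $a_{u}\neq e_{u}$. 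The lattice condition \eqref{eq: Lattice condition on lambda} only relates the combinations $\lambda_{u}\frac{1+\varphi}{1+\varphi\lambda_{u}}$ across scales; a factor with $a_{u}\neq e_{u}$ is not a power of $\lambda_{t}\frac{1+\varphi}{1+\varphi\lambda_{t}}$, so no choice of exponents in the last block cancels it exactly, and the $(1\pm\epsilon)$ window does not help since these factors are fixed numbers bounded away from $1$. The paper's proof never faces this: it only treats those $C\in\mathcal{C}(t)$ which intersect $[b]_{-n}^{n}$ (the only ones used in Lemma \ref{EVC condition}), so that $c_{i}=b_{i}$ on $[0,n]$, the $b$-prefix and the copy of $c$ agree, and there is nothing to compensate. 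You should restrict to that case rather than attempt a compensation that is algebraically impossible.

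The second gap is the parity repair. Your substitutions $1^{a}323\mapsto1^{a+2}3$ (fine for $a\geq1$) preserve the number of $1$-runs and change $L_{t},V_{t}$ by $2$ each; consequently no sequence of them reaches an odd $p$, and your fix --- altering the single coordinate $d_{N_{t}-1}$ --- changes $L_{t}$ by $1$ but its effect on $V_{t}$ is not determined: the pair at $(N_{t}-1,N_{t})$ involves the coordinate $x_{lk_{t}+N_{t}}$, which the cylinder $C\cap[d]_{lk_{t}-n}^{lk_{t}+N_{t}-1}$ does not fix. After such a repair the derivative is not even constant on that set, and the exponents of $\frac{1+\varphi}{1+\varphi\lambda_{t}}$ and $\lambda_{t}$ no longer match; the resulting error factor does tend to $1$ and could be absorbed into $(1\pm\epsilon)$ for large $t$, but that argument must be made, whereas you claim exactness. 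The paper sidesteps parity entirely by rebuilding the whole block: one run of $1$'s of length $V_{t}(c)+p(j,t)+1$, then the word ``321'' repeated to restore the run count, then ``32'' filler, so that $\Delta L_{t}=\Delta V_{t}=p(j,t)$ for either parity and the block ends in a symbol different from $1$, leaving no pair straddling the unfixed coordinate; the ``23''-count in \eqref{eq: Free bits in C} is exactly what makes this rebuilt block fit inside $n_{t}$ positions. A smaller quibble of the same sort: you only extend runs forward, which fails if all the guaranteed ``23'' occurrences lie in a separator preceding the first $1$-run; one also needs the backward move $3232\,1^{a}\mapsto32\,1^{a+2}$.
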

Recall that $k_{t}>N_{t}$ is defined as a $\left(1\pm3^{-3N_{t}}\right)$
mixing time for ${\bf Q}$. 
\begin{proof}
Let $[b]_{-n}^{n},\epsilon>0$ and $j\in\NN$ be given. By Lemma \ref{lem: R_N derivatives}
there exists $\tau$ such that for every $t\geq\tau$ and $1\leq l\leq m_{t}/k_{t}$
(here $lk_{t}\in\left[N_{t},m_{t}\right)$),
\[
\RN{lk_{t}}(x)=(1\pm\epsilon)\prod_{k=1}^{t}\left[\left(\frac{1+\varphi}{1+\varphi\lambda_{k}}\right)^{L_{k}\circ T^{lk_{t}}(x)-L_{k}(x)}\cdot\lambda_{k}^{V_{k}\circ T^{lk_{t}}(x)-V_{k}(x)}\right].
\]
Choose $t_{0}$ to be any integer which satisfies $t_{0}>\max\left(\tau,j\right)$
and $M_{t_{0}}>n$. 

Let $t>t_{0}$ and choose a cylinder set $[c]_{0}^{N_{t}}\in\mathcal{C}(t)$
which intersects $[b]_{-n}^{n}.$ That is $c_{i}=b_{i}$ for $i\in[0,n]$.
We need now to choose $d\in\Sigma_{{\bf A}}\left(N_{t}+n\right)$
which satisfies (\ref{eq: cylinder for EVC}). Notice that for $x\in[d]_{lk_{t}-n}^{lk_{t}+N_{t}}\cap[c]_{0}^{N_{t}},$
\begin{eqnarray*}
 &  & \prod_{k=1}^{t}\left[\left(\frac{1+\varphi}{1+\varphi\lambda_{k}}\right)^{L_{k}\circ T^{lk_{t}}(x)-L_{k}(x)}\cdot\lambda_{k}^{V_{k}\circ T^{lk_{t}}(x)-V_{k}(x)}\right]\\
 & = & \prod_{k=1}^{t}\left[\left(\frac{1+\varphi}{1+\varphi\lambda_{k}}\right)^{L_{k}(d)-L_{k}(c)}\cdot\lambda_{k}^{V_{k}(d)-V_{k}(c)}\right],
\end{eqnarray*}
in this representation we look at $[d]_{-n}^{N_{t}}$. For all $k\in[0,M_{t-1}]$,
let 
\[
d_{k}=c_{k}
\]
and for all $k\in[-n,0)$, 
\[
d_{k}=b_{k}.
\]
Notice that this means that for $k\in[-n,n]$, $d_{k}=b_{k}$ and
thus 
\[
[d]_{lk_{t}-n}^{lk_{t}+N_{t}}\subset T^{-lk_{t}}[b]_{-n}^{n}.
\]
 Let $p(j,t)\leq\frac{n_{t}}{20}$ be the integer (condition (\ref{eq: n_t is very large w.r.t lattic condition}))
such that 
\[
\left(\lambda_{t}\cdot\frac{1+\varphi}{1+\varphi\lambda_{t}}\right)^{p(j,t)}=\lambda_{j}\cdot\frac{1+\varphi}{1+\varphi\lambda_{j}}.
\]
Set $d_{k}=1$ for all $k\in\left[M_{t-1},M_{t-1}+V_{t}(c)+p(j,t)\right]$
and then continue repeatedly with the sequence $"321"$ $L_{t}(c)-V_{t}(c)$
times. Since $c$ satisfies (\ref{eq: Free bits in C}), this construction
is well defined (e.g. we have not reached yet $k=N_{t}-1$). Continue
with sequences of $32$ till $k=N_{t}-1$. 

Thus we have defined $d$ in such a way so that 
\[
L_{t}(d)-L_{t}(c)=p(j,t)
\]
and 
\[
V_{t}(d)-V_{t}(c)=p(j,t).
\]
In addition for all $0\leq k<t$,
\[
L_{k}(d)=L_{k}(c)\ {\rm and}\ V_{k}(c)=V_{k}(d).
\]
 Thus for all $x\in[d]_{lk_{t}-n}^{lk_{t}+N_{t}}\cap[c]_{0}^{N_{t}},$
\begin{eqnarray*}
\RN{lk_{t}}(x) & = & \left(1\pm\epsilon\right)\prod_{k=1}^{t}\left[\left(\frac{1+\varphi}{1+\varphi\lambda_{k}}\right)^{L_{k}(d)-L_{k}(c)}\cdot\lambda_{k}^{V_{k}(d)-V_{k}(c)}\right]\\
 & = & \left(1\pm\epsilon\right)\left(\lambda_{t}\cdot\frac{1+\varphi}{1+\varphi\lambda_{t}}\right)^{p(j,t)}\\
 & = & \left(1\pm\epsilon\right)\left(\lambda_{j}\frac{1+\varphi}{1+\varphi\lambda_{j}}\right).
\end{eqnarray*}
This proves the lemma. 
\end{proof}
In the course of the proof one sees that the event 
\[
\left([b]_{-n}^{n}\cap[c]_{0}^{N_{t}}\right)\cap\left(T^{-lk_{t}}[b]_{-n}^{n}\cap\left\{ \RN{lk_{t}}=\lambda_{j}\cdot\frac{1+\varphi}{1+\varphi\lambda_{j}}\cdot\left(1\pm\epsilon\right)\right\} \right)^{c}
\]
is $\F{}\left(lk_{t}-n,lk_{t}+N_{t}\right)$ measurable and does not
depend on $\cF\left(lk_{t}+N_{t},lk_{t}+2N_{t}\right)$. 
\begin{rem}
Given $[c]_{0}^{N_{t}}\in\mathcal{C}_{t}$ we have defined $d=d(c)\in\Sigma_{{\bf A}}\left(N_{t}+n\right)$.
The definition of $d$ is not necessarily one to one. This is because
if $\left[\tilde{c}\right]_{0}^{M_{t-1}}=[c]_{0}^{M_{t-1}}$, $V_{t}(c)=V_{t}\left(\tilde{c}\right)$
and $L_{t}(c)=L_{t}\left(\tilde{c}\right)$ then $d(c)=d\left(\tilde{c}\right)$.
In order to make it one to one we will use 
\[
[d(c),c]_{lk_{t}-n}^{lk_{t}+2N_{t}}
\]
instead of $[d(c)]_{lk_{t}}^{lk_{t}+N_{t}}$where by $\left[a,b\right]_{l}^{l+{\rm length}(a)+{\rm length}(b)}$
we mean the concatenation of $a$ and $b$. This can be thought of
as putting a Marker on $d(c)$. In order that the concatenation will
be in $\Sigma_{A}$ we need that 
\[
{\bf Q}\left(d(c)_{N_{t}-1},c_{0}\right)>0.
\]
This can be done by possibly changing the last two coordinates of
$d(c)$. This will change the value of $\RN{lk_{t}}$ by at most a
factor of $\lambda_{t}^{\pm4}$ , which is close enough to one. We
will denote by ${\bf d}(c):=(\tilde{d}(c),c)$. We still have 
\[
[\mathbf{d}(c)]_{lk_{t}-n}^{lk_{t}+2N_{t}}\subset T^{-lk_{t}}[b]_{-n}^{n}\cap\left[\RN{lk_{t}}=\lambda_{j}\cdot\frac{1+\varphi}{1+\varphi\lambda_{j}}\cdot\left(1\pm\epsilon\right)\right],
\]
but now the map $c\mapsto\mathbf{d}(c)$ is one to one. 
\end{rem}
In the proof of the next lemma we will make use of the fact that for
every cylinder set $\left([a]_{m}^{l}\right)^{c}$ is $\cF(m,l)$
measurable. 
\begin{lem}
\label{EVC condition}For every $[b]_{-n}^{n}$ cylinder set, $\epsilon>0$
and $j\in\NN$ there exists $t_{0}\in\NN$ such that for all $t>t_{0}$,
\[
\mu\left(\bigcup_{l=1}^{m_{t}/4k_{t}}\mathfrak{RSC}\left(4lk_{t},[b]_{-n}^{n},j,\epsilon\right)\right)\geq0.8\mu\left([b]_{-n}^{n}\right).
\]
\end{lem}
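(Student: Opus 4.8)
The plan is to run a Borel--Cantelli type argument over a family of well-separated ``witness'' cylinders: cover almost all of $[b]_{-n}^{n}$ by the good cylinders $C\in\mathcal{C}(t)$, and for each such $C$ produce the $\approx m_t/4k_t$ nearly independent witnesses of Lemma~\ref{Main step for EVC. }, placed at the positions $4lk_t$, so that conditionally on $[b]_{-n}^{n}\cap C$ at least one of them realises the prescribed value of $\RN{4lk_t}$ with probability close to $1$.

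Write $B:=[b]_{-n}^{n}$ and fix $\epsilon>0$, $j\in\NN$. First I would take $t_0$ as in Lemma~\ref{Main step for EVC. }, enlarged so that $M_{t_0}>n$ (hence $n<N_t$ for $t>t_0$) and so that the path estimate below holds. For $t>t_0$ and each $C=[c]_0^{N_t}\in\mathcal{C}(t)$ meeting $B$, let $\mathbf d(c)$ be the marked cylinder from the Remark and put $E_{C,l}:=[\mathbf d(c)]_{4lk_t-n}^{4lk_t+2N_t}$ for $1\le l\le L:=\lfloor m_t/4k_t\rfloor$ (discarding the at most one block that would leave $[N_t,M_t)$). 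Applying Lemma~\ref{Main step for EVC. } with index $4l$ in place of $l$ gives $B\cap C\cap E_{C,l}\subset\mathfrak{RSC}(4lk_t,B,j,\epsilon)$. The spacing $4k_t$ is chosen so that the blocks $I_l:=[4lk_t-n,4lk_t+2N_t]$ are pairwise disjoint, lie inside the stationary segment $[N_t,M_t)$ where $P_j\equiv\mathbf{Q}$, and are separated by gaps of length $4k_t-n-2N_t>k_t$ (using $k_t>N_t>n$).

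The core step is the approximate independence of the $E_{C,l}$. Since every $I_l$ lies beyond $N_t$, conditioning on $x_{N_t}=s$ (with $s:=c_{N_t}$) turns the process on $[N_t,M_t)$ into the $\mathbf{Q}$-chain $\nu_s:=\nu_{\delta_s,\mathbf{Q}}$, so it suffices to bound $\nu_s\!\left(\bigcap_{l\le L}E_{C,l}^{c}\right)$. Writing $A_{l-1}:=\bigcap_{i<l}E_{C,i}^{c}$ and using that $E_{C,l}$ is separated from $A_{l-1}$ by more than $k_t$, the mixing estimate~\eqref{eq: Mixing time} yields $\nu_s(A_{l-1}\cap E_{C,l})\ge(1-3^{-3N_t})\,\nu_s(A_{l-1})\,\nu_{\pi_{\mathbf{Q}},\mathbf{Q}}(T^{4lk_t}E_{C,l})$. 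Because $E_{C,l}$ fixes a single admissible path of length $n+2N_t\le 3N_t$ each of whose transitions has $\mathbf{Q}$-probability $\ge\tfrac{1}{1+\varphi}$, one has $\nu_{\pi_{\mathbf{Q}},\mathbf{Q}}(T^{4lk_t}E_{C,l})\ge c_0(1+\varphi)^{-3N_t}$ for a fixed $c_0>0$, which for $t$ large exceeds $3^{-3N_t}$ since $1+\varphi<3$. Hence $\nu_s(E_{C,l}\mid A_{l-1})\ge(1-3^{-3N_t})3^{-3N_t}\ge 9^{-3N_t}$, and iterating over $l$ gives $\nu_s\!\left(\bigcap_{l\le L}E_{C,l}^{c}\right)\le(1-9^{-3N_t})^{L}\le 1/t$ by~\eqref{m_l large so that the sum of bad events is small}. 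Multiplying by $\mu(B\cap C)$ (Markov property at time $N_t$) yields $\mu\!\left(B\cap C\cap\bigcup_l E_{C,l}\right)\ge(1-1/t)\,\mu(B\cap C)$.

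It remains to sum over $C$. The sets $B\cap C$, $C\in\mathcal{C}(t)$, are pairwise disjoint, so
\[
\mu\!\Big(\bigcup_{l=1}^{L}\mathfrak{RSC}(4lk_t,B,j,\epsilon)\Big)\ \ge\ (1-1/t)\sum_{C\in\mathcal{C}(t)}\mu(B\cap C)\ =\ (1-1/t)\,\mu\!\Big(B\cap\bigcup_{C\in\mathcal{C}(t)}C\Big).
\]
Since $\bigcup_{C\in\mathcal{C}(t)}C$ has measure $\ge 1-\tfrac{1}{2t}$ by~\eqref{Ergodic theorem cond 1}--\eqref{eq:Ergodic theorem cond 2}, the right-hand side is $\ge(1-1/t)\big(\mu(B)-\tfrac{1}{2t}\big)$, which tends to $\mu(B)$; enlarging $t_0$ makes it exceed $0.8\,\mu(B)$. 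The main obstacle is the core step: the mixing estimate~\eqref{eq: Mixing time} concerns the stationary $\mathbf{Q}$-chain, so one must first condition on $x_{N_t}$ to force all witnesses and the gaps between them into $[N_t,M_t)$, and then check that the per-transition bound $\tfrac{1}{1+\varphi}>\tfrac13$ really pushes the single-path probability above the mixing error $3^{-3N_t}$ -- which is exactly what dictates both the spacing $4k_t$ and the choice of $m_t$ in~\eqref{m_l large so that the sum of bad events is small}.
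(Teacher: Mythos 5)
Your proposal is correct and takes essentially the same approach as the paper's own proof: cover $[b]_{-n}^{n}$ by the good cylinders of $\mathcal{C}(t)$, place the marked witness blocks $\mathbf{d}(c)$ at the spaced times $4lk_{t}$ inside the stationary $\mathbf{Q}$-segment, iterate the mixing estimate \eqref{eq: Mixing time} against the single-path lower bound $\gtrsim 3^{-3N_{t}}$ to get the failure probability $\left(1-9^{-3N_{t}}\right)^{m_{t}/4k_{t}}\leq 1/t$ via \eqref{m_l large so that the sum of bad events is small}, and then sum over the disjoint sets $[b]_{-n}^{n}\cap C$. Your explicit conditioning at time $N_{t}$ and the discarding of the one block that could protrude past $M_{t}$ are only slightly more careful bookkeeping of what the paper does implicitly, so the two arguments coincide in substance.
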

\begin{proof}
Let $[b]_{-n}^{n}$ be a cylinder set and $t_{0}$ be as in Lemma
\ref{Main step for EVC. }. For all $t\geq t_{0}$, $[c]_{0}^{N_{t}}\in\mathcal{C}(t)$
which intersects $[b]_{-n}^{n}$ and $1\leq l\leq m_{t}/4k_{t}$,
\[
\left([c]_{0}^{N_{t}}\cap[b]_{-n}^{n}\right)\cap\left(\mathfrak{RSC}\left(4lk_{t},[b]_{-n}^{n},j,\epsilon\right)\right)^{c}\subset[c]_{0}^{N_{T}}\cap[b]_{-n}^{n}\cap\left([\mathbf{d}(c)]_{4lk_{t}-n}^{4lk_{t}+N_{t}}\right)^{c}
\]
The fact that $P_{j}\equiv\mathbf{Q}$ for $j\in\left[N_{t},M_{t}\right)$
implies that 
\begin{eqnarray*}
\mu\left([d(c)]_{4lk_{t}-n}^{4lk_{t}+2N_{t}}\right) & = & \nu_{\pi_{4lk_{t}-n},\mathbf{Q}}\left([d(c)]_{0}^{4lk_{t}+2N_{t}+n}\right)\\
 & \geq & \pi_{4lk_{t}-n}\left(d_{-n}\right)\mathbf{Q}_{1,3}^{2N_{t}+n-1}\\
 & \geq & \left(\pi_{\mathbf{Q}}\left(d_{-n}\right)-\frac{1}{3^{N_{t}}}\right)\mathbf{Q}_{1,3}^{2N_{t}+n-1}\ \ (\text{by\ \eqref{eq: almost stationary dist. for Q})}\\
 & \gtrsim & \frac{1}{3^{3N_{t}}}.
\end{eqnarray*}
Therefore, one has by many application of (\ref{eq: Mixing time})
(mixing time condition), 
\begin{eqnarray*}
 &  & \mu\left(\left([b]_{-n}^{n}\cap[c]_{0}^{N_{t}}\right)\cap\left(\bigcup_{l=1}^{m_{t}/4k_{t}}\mathfrak{RSC}\left(lk_{t},[b]_{-n}^{n},j,\epsilon\right)\right)^{c}\right)\\
 & \leq & \mu\left(\left([b]_{-n}^{n}\cap[c]_{0}^{N_{t}}\right)\cap\left\{ \bigcap_{l=1}^{m_{t}/4k_{t}}\left([d_{c}]_{4lk_{t}}^{4lk_{t}+N_{t}}\right)^{c}\right\} \right)\\
 & \leq & \mu\left(\left([b]_{-n}^{n}\cap[c]_{0}^{N_{t}}\right)\right)\prod_{1=1}^{m_{t}/4k_{t}}\left[\left(1+3^{-3N_{t}}\right)\left(1-\nu_{\pi_{{\bf Q}},{\bf Q}}\left([d_{c}]_{4lk_{t}}^{4lk_{t}+N_{t}}\right)\right)\right]\\
 & \leq & \mu\left(\left([b]_{-n}^{n}\cap[c]_{0}^{N_{t}}\right)\right)\left[\left(1+3^{-3N_{t}}\right)\left(1-3^{-3N_{t}}\right)\right]^{m_{t}/4k_{t}}\\
 & \overset{\eqref{m_l large so that the sum of bad events is small}}{\leq} & \frac{1}{t}\mu\left(\left([b]_{-n}^{n}\cap[c]_{0}^{N_{t}}\right)\right).
\end{eqnarray*}
Notice that we used the fact that 
\[
\left(4\left(l+1\right)k_{t}-n\right)-\left(4lk_{t}+2N_{t}\right)>(4l+3)k_{t}-\left(4l+2\right)k_{t}=k_{t}.
\]
If $t$ is large enough then 
\[
\mu\left(\Sigma_{{\bf A}}\backslash\bigcup_{C\in\mathcal{C}(t)}C\right)<0.1\mu\left([b]_{-n}^{n}\right),
\]
and for all $[c]_{0}^{N_{t}}=C\in\mathcal{C}(t)$, 
\begin{eqnarray*}
\mu\left([b]_{-n}^{n}\cap[c]_{0}^{N_{t}}\cap\left(\bigcup_{l=1}^{m_{t}/4k_{t}}\mathfrak{RSC}\left(lk_{t},[b]_{-n}^{n},j,\epsilon\right)\right)\right) & > & \left(1-\frac{1}{t}\right)\mu\left([b]_{-n}^{n}\cap[c]_{0}^{N_{t}}\right)\\
 & \geq & 0.9\mu\left([b]_{-n}^{n}\cap[c]_{0}^{N_{t}}\right).
\end{eqnarray*}
The Lemma follows from 
\begin{eqnarray*}
 &  & \mu\left(\bigcup_{l=1}^{m_{t}/4k_{t}}\mathfrak{RSC}\left(lk_{t},[b]_{-n}^{n},j,\epsilon\right)\right)\\
 & \geq & \mu\left(\uplus_{[c]_{0}^{N_{t}}\in\mathcal{C}(t)}[b]_{-n}^{n}\cap[c]_{0}^{N_{t}}\cap\bigcup_{l=1}^{m_{t}/4k_{t}}\mathfrak{RSC}\left(lk_{t},[b]_{-n}^{n},j,\epsilon\right)\right)\\
 & \geq & 0.9\sum_{[c]_{0}^{N_{T}}\in\mathcal{C}(t)}\mu\left([b]_{-n}^{n}\cap[c]_{0}^{N_{t}}\right)\\
 & \geq & 0.8\mu\left(\left[b\right]_{-n}^{n}\right)
\end{eqnarray*}

\end{proof}
\begin{proof}[Proof of Theorem \ref{thm: lambdas are in R(T)}] 

This is a standard approximation technique. Let $j\in\NN$, $A\in\BB$,
$\mu(A)>0$ and $\epsilon>0$. Since the ratio set condition on the
derivative is monotone with respect to $\epsilon$ and

\[
1<\lambda_{j}\cdot\frac{1+\varphi}{1+\varphi\lambda_{j}}<2,
\]
 we can assume that 
\begin{equation}
1\leq\lambda_{j}\cdot\frac{1+\varphi}{1+\varphi\lambda_{j}}(1\pm\epsilon)\leq2.\label{eq:epsilon is small enough}
\end{equation}
 Since $\cF(-n,n)\uparrow\BB$ as $n\to\infty$, there exists a cylinder
set $\mathfrak{b}=[b]_{-n}^{n}$ such that 
\[
\mu\left(A\cap\mathfrak{b}\right)>0.99\mu\left(\mathfrak{b}\right).
\]
 By Lemma \ref{EVC condition} there exists $t\in\NN$ for which 
\[
\mu\left(\mathfrak{b\cap}\left\{ \bigcup_{l=1}^{m_{t}/4k_{t}}T^{-4lk_{t}}\mathfrak{b}\cap\left[\RN{4lk{}_{t}}=\lambda_{j}\cdot\frac{1+\varphi}{1+\varphi\lambda_{j}}\cdot\left(1\pm\epsilon\right)\right]\right\} \right)>0.8\mu(\mathfrak{b}).
\]
Denote by 
\[
B=\mathfrak{b\cap}\left\{ \bigcup_{l=1}^{m_{t}/4k_{t}}T^{-4lk_{t}}\mathfrak{b}\cap\left[\RN{4lk{}_{t}}=\lambda_{j}\cdot\frac{1+\varphi}{1+\varphi\lambda_{j}}\cdot\left(1\pm\epsilon\right)\right]\right\} .
\]
We can assume that for $x\in B$, there exists $C(x)=[c]_{0}^{N_{t}}\in\mathcal{C}_{t}$
so that $x\in C(x)$. Then by the proof of Lemma \ref{Main step for EVC. }
there exists $d(C(x))\in\Sigma_{{\bf A}}\left(2N_{t}+n\right)$ such
that if $x\in[d(C(x))]_{4lk_{t}-n}^{4lk_{t}+2N_{t}}$, then 
\begin{equation}
\RN{4lk_{t}}(x)=\lambda_{j}\cdot\frac{1+\varphi}{1+\varphi\lambda_{j}}\cdot\left(1\pm\epsilon\right)\ \text{and}\ x\in T^{-4lk_{t}}\mathfrak{b}.\label{eq: RN on D(C(x))}
\end{equation}
 Define $\phi:B\to\NN$ 
\[
\phi(x):=\inf\left\{ l\leq m_{t}/4k_{t}:\ [x]_{4lk_{t}-n}^{4lk_{t}+2N_{t}}=[{\bf d}(C(x))]_{4lk_{t}-n}^{4lk_{t}+2N_{t}}\right\} 
\]
and $S=T^{\phi}:B\to S(B)\subset\mathfrak{b}$. We claim that $S$
is one to one. Indeed, since the map $[c]_{0}^{N_{t}}\mapsto{\bf d}(c)$
is one to one, for every $x,y\in B$ such that $C(x)\neq C(y)$,
\[
[Sy]_{-n}^{2N_{t}}=[{\bf d}(C(y))]_{-n}^{2N_{t}}\neq[{\bf d}(C(x))]_{-n}^{2N_{t}}=[Sx]_{-n}^{2N_{t}},
\]
consequently $Sx\neq Sy$. In addition, by the definition of $\phi$,
if $x\neq y$ and $C(x)=C(y)$ then $Sx\neq Sy$. 

It follows from (\ref{eq: RN on D(C(x))}) and (\ref{eq:epsilon is small enough}),
that for all $x\in B$, 
\[
S'(x):=\frac{d\mu\circ S}{d\mu}(x)=\lambda_{j}\cdot\frac{1+\varphi}{1+\varphi\lambda_{j}}\cdot\left(1\pm\epsilon\right)\in\left[1,2\right].
\]
Therefore $\frac{d\mu\circ S^{-1}}{d\mu}(y)\geq\frac{1}{2}$ for all
$y\in S(B)$. A calculation shows that 
\begin{eqnarray*}
\mu(S(B)\cap A) & > & \mu(S(B))-\mu(\mathfrak{b}\backslash A)\\
 & > & \mu(B)-\mu(\mathfrak{b}\backslash A)\\
 & = & 0.79\mu\left(\mathfrak{b}\right),
\end{eqnarray*}
and 
\[
\mu\left(S^{-1}\left(S(B)\cap A\right)\right)>\frac{\mu\left(S(B)\cap A\right)}{2}>0.39\mu(\mathfrak{b}).
\]
So 
\begin{eqnarray*}
 &  & \sum_{l=1}^{m_{t}/4lk_{t}}\mu\left(A\cap\left\{ T^{-4lk_{t}}A\cap\left[\RN{4lk{}_{t}}=\lambda_{j}\cdot\frac{1+\varphi}{1+\varphi\lambda_{j}}\cdot\left(1\pm\epsilon\right)\right]\right\} \cap\left[\phi=4lk_{t}\right]\right)\\
 & \geq & \mu\left(\left(B\cap A\right)\cap S^{-1}\left(S(B)\cap A\right)\right)\ \ \ \ \ \left\{ \text{Notice that }B,\ S(B)\subset\mathfrak{b}\right\} \\
 & \geq & \mu\left(B\cap A\right)-\mu\left(\mathfrak{b}\backslash S^{-1}\left(S(B)\cap A\right)\right)\\
 & \geq & 0.18\mu\left(\mathfrak{b}\right),
\end{eqnarray*}
and thus there exists $l\in\NN$ such that 
\[
\mu\left(A\cap T^{-4lk_{t}}A\cap\left[\RN{4lk_{t}}=\lambda_{j}\cdot\frac{1+\varphi}{1+\varphi\lambda_{j}}\cdot\left(1\pm\epsilon\right)\right]\right)>0.
\]
This proves the Theorem. 

\end{proof}

\section{Concluding Remarks}

One feature of this construction is that if $f(x,y)=(x+y,x)$ , $\Phi:\Sigma_{A}\to\mathbb{T}^{2}$
is the semi conjugacy map constructed from the Markov partition $\left\{ R_{1},R_{2},R_{3}\right\} $
and $\nu=\mu\circ\Phi^{-1}$, then:
\begin{itemize}
\item $\Phi:\ \left(\Sigma_{A},\mu,T\right)\to\left(\mathbb{T}^{2},\nu,f\right)$
is one to one, hence a measure theoretic isomorphism.
\end{itemize}
For every $n_{k}<n_{k-1}<\cdots<n_{1}<0$ and $i_{1},...,i_{k}\in\{1,2,3\}$,
\[
\nu\left(\bigcap_{j=1}^{k}f^{-n_{j}}R_{i_{k}}\right)=\text{Leb}\left(\bigcap_{j=1}^{k}f^{-n_{j}}R_{i_{k}}\right).
\]

\begin{rem}
Given a mixing TMS $\Sigma_{A}$, one can construct a type ${\rm III}_{1}$
Markov shift supported on $\Sigma_{A}$ as follows. Define ${\rm Q}$
to be the matrix with 
\[
Q_{i,j}=\begin{cases}
1/\sum_{l=1}^{n}A_{i,l}, & A_{i,j}=1\\
0 & A_{i,j}=0
\end{cases}.
\]
If $A$ has a row $i\in\{1,..,|S|\}$ with at least two $1$'s, one
can proceed as in our example to define ${\rm Q}_{l}$ to be the matrix
${\rm Q}$ perturbed in the $i$-th row between two non zero coordinates.
The rest of the proof remains the same. 
\end{rem}
We end this section with the following open question: Given a mixing
TMS $\Sigma\subset F^{\ZZ}$ with $F$ finite. Does there exist a
Markov measure $\nu=\M{\pi_{k},R_{k}}$ so that:
\begin{itemize}
\item $\nu\circ T\sim\nu$. 
\item $\nu$ is fully supported on $\Sigma$ and the shift is ergodic with
respect to $\nu$. 
\item The non singular Markov shift $\left(\Sigma,\BB,\nu,T\right)$ is
of type ${\rm II}_{\infty}$ (preserves an a.c.i.m but no a.c.i.p.)
or ${\rm III_{\lambda}},\ 0\leq\lambda<1$. 
\item Preferably $\nu$ is half stationary meaning that there exists an
irreducible and aperiodic stochastic Matrix $R$ so that for all $k<0$,
$R_{k}=R$ and $\pi_{k}=\pi_{R}$. 
\end{itemize}
If in addition $\Sigma$ is a $TMS$ arising from a hyperbolic Toral
automorphism and $R$ is the stochastic matrix representing the Lebesgue
measure then a positive answer to this question may give new examples
of Anosov diffeomorphisms by the methods of \cite{Kos1}. In the case
where $\Sigma=\{0,1\}^{\ZZ}$ and $\nu$ is a half stationary product
measure it was shown in \cite{Kos2} that the shift is either of type
${\rm III}_{1}$, dissipative or equivalent to a classical Bernoulli
shift ($\nu$ is equivalent to a product measure with i.i.d entries).

\end{document}